\theoremstyle{plain}
\newtheorem*{theorem*}{Theorem}
\newtheorem{theorem}{Theorem}[section]
\newtheorem{proposition}[theorem]{Proposition}
\newtheorem{lemma}[theorem]{Lemma}
\newtheorem{corollary}[theorem]{Corollary}
\theoremstyle{definition}
\theoremstyle{remark}
\newtheorem{remark}[theorem]{Remark}
\newtheorem{example}[theorem]{Example}
\numberwithin{equation}{section}
\DeclareMathOperator{\adj}{adj}
\DeclareMathOperator{\Hess}{Hess}
\DeclareMathOperator{\Tr}{Tr}
\newcommand{\Lie}[1]{\mathrm{#1}}
\newcommand{\G}{\Lie{G}}
\newcommand{\GL}{\Lie{GL}}
\newcommand{\SO}{\Lie{SO}}
\newcommand{\Spin}{\Lie{Spin}}
\newcommand{\Sp}{\Lie{Sp}}
\newcommand{\SU}{\Lie{SU}}
\newcommand{\Un}{\Lie{U}}
\newcommand{\bC}{\mathbb{C}}
\newcommand{\bR}{\mathbb{R}}
\newcommand{\bZ}{\mathbb{Z}}
\newcommand{\vol}{\mathrm{vol}}
\newcommand{\Id}{\mathrm{Id}}
\newcommand{\Hd}[1]{{\star}_{#1}#1}
\newcommand{\hook}{\mathbin{\lrcorner}}
\newcommand{\eqbreak}[1][2]{\\ &\hspace{#1em}}
\title{Nearly Parallel \texorpdfstring{$\G_{2}$}{G2}-Structures with Torus Symmetry}
\author{Giovanni Russo}
\address{Mathematics Area\\
SISSA - Scuola Internazionale Superiore di Studi Avanzati\\
via Bonomea 265\\
34136 Trieste (TS)\\
Italy} %
\email{girusso@sissa.it}
\author{Andrew Swann}
\address{Department of Mathematics and DIGIT\\
Aarhus University\\
Ny Munkegade 118\\
Bldg 1530\\
DK-8000 Aarhus C\\
Denmark} %
\email{swann@math.au.dk}
\begin{document}

\begin{abstract}
  We study nearly parallel $\G_{2}$-structures with a three-torus symmetry via
  multi-moment map techniques.
  An effective three-torus action on a nearly parallel $\G_{2}$-manifold yields
  a multi-moment map.
  The torus acts freely on its regular level sets, so they are torus bundles
  over smooth three-dimensional manifolds.
  We show that the geometry of the base spaces is specified by two triples of
  closed two-forms related by a Riemannian metric.
  We then describe an inverse construction producing invariant nearly parallel
  $\G_{2}$-structures from three-dimensional data.
  We observe that locally this may produce examples with four-torus symmetry.
\end{abstract}

\subjclass{Primary 53C25; Secondary 53C26, 53D20, 57S25}

\maketitle

\section*{Introduction}

A $\G_{2}$-structure on a seven-dimensional manifold $M$ is a reduction of the
principal frame bundle to the compact, connected, simply connected, simple Lie
group~$\G_{2}$.
This is equivalent to having a global three-form $\varphi$ on $M$ that is
pointwise linearly equivalent to a standard model on~$\bR^7$ with the cross
product defined by multiplication of imaginary octonions \cite{bryant}.
In particular the three-form $\varphi$ induces a Riemannian metric $g$ and an
orientation on~$M$, and hence, a Hodge star operator~$\star$.

In this paper, we are interested in nearly parallel $\G_{2}$-structures, which
are the ones for which the three-form $\varphi$ satisfies the condition
$d\varphi = \lambda\, {\star \varphi}$, for a positive constant $\lambda$
\cite{gray}.
For such structures the induced Riemannian metric is Einstein with positive
scalar curvature \cite{besse, bryant1}, so complete examples are necessarily
compact, and the geometries are equivalently described as seven-manifolds with a
non-parallel real Killing spinor \cite{baum, friedrich-kms}.
By \cite{bar}, the holonomy of the Riemannian eight-dimensional cone over a
nearly parallel $\G_{2}$ manifold may be (a)~the full group $\Spin(7)$, or one
of the subgroups (b)~$\SU(4)$, (c)~$\Sp(2)$, (d)~${1}$, according to the
dimension of the space of Killing spinors.
As will be discussed below, examples of nearly parallel $\G_{2}$-structures
exist in abundance.
However, case (d) only occurs when the metric is locally isometric to the round
seven-dimensional sphere.

In this paper we focus on nearly parallel $\G_{2}$-structures with torus
symmetry.
Several examples of compact nearly parallel $\G_{2}$-manifolds with large
symmetry groups have been classified.
The homogeneous examples are to be found in \cite{friedrich-kms}, which includes
tables indicating the full isometry group.
In general, this is larger than the group of automorphisms of the
$\G_{2}$-structure, as there can be isometries that do not preserve the
three-form.
However, Reidegeld \cite[Theorem~1, part 2, and Table~2]{reidegeld} summarises
the automorphism groups, and one sees that the rank of the symmetry group in the
homogeneous examples is at most~$3$.
These include the remarkable family of Aloff-Wallach
spaces~$\SU(3)/T^{1}_{k,\ell}$, described as simply-connected spaces of the form
$\Un(3)/T^{2}_{a,b}$, with $T^{2}_{a,b} \cap \SU(3)$ one-dimensional and
$T^{2}_{a,b} \cap \bC \Id_{3}$ zero-dimensional.
We therefore focus on geometries with $T^{3}$-symmetry.

The general Aloff-Wallach spaces are of class (a) above, and this family
contains infinitely many distinct homotopy types \cite{kreck-stolz} and they are
all topologically formal~\cite{fernandez-fkm}.
Geometries in class (b) are Sasaki-Einstein, their cone is Calabi-Yau.
Infinitely many compact examples of these Sasaki-Einstein structures with
$T^{3}$-symmetry may be constructed starting from ordinary toric geometry
\cite{cho-fo,futaki-ow}, see also \cite{cvetic}.
Geometries from class (c) are also $3$-Sasaki manifolds \cite{boyer-galicki}.
Here for each second Betti number $b_{2} > 0$ there are infinitely many distinct
compact homotopy types where the $\G_{2}$-structure has $T^{3}$-symmetry
\cite{boyer-galicki,hepworth}.
The examples also give structures in class~(a) via a canonical variation
\cite{friedrich-kms}, and these new structures still have $T^{3}$-symmetry, and
in fact at least $T^{2} \times \SU(2)$-symmetry.
The general theories of Sasaki-Einstein and $3$-Sasaki manifolds, ensure that
the rank of the symmetry group of the $\G_{2}$-structure cannot be higher than
three in cases (b) and~(c), which again justifies the choice of focussing on
$T^{3}$-symmetry.

For a general nearly parallel $\G_{2}$-manifold~$M$ with $T^{3}$-symmetry, we
will show how to construct a multi-moment $\nu\colon M \to \bR$ in the sense of
\cite{madsen-swann}.
This leads us to investigate the geometry of the quotients
$Q = \nu^{-1}(s)/T^{3}$ of the level sets of~$\nu$, when $s$ is a non-zero, regular
value.
These~$Q$ are smooth three-manifolds which carry a surprising amount of
geometric data.
As $\nu^{-1}(s)$ is a $T^{3}$-bundle over~$Q$, there is, by the usual Chern-Weil
theory, a cohomology class in the integral lattice of
$H^{2}(Q,\bZ^{3}) \otimes \bR = H^{2}(Q,\bR^{3})$ describing this bundle via its
curvature.
However, in our situation when $Q$ comes from a $\G_{2}$-structure as above, we
show that the class has a representative $\tau/s$,
$\tau= (\tau_{1},\tau_{2},\tau_{3})^{T} \in \Omega^{2}(Q,\bR^{3})$, for which the three-components
are pointwise linearly independent closed two-forms, even when $b_{2}(Q) = 0$.
Additionally, $Q$ has a Riemannian metric represented by
$H \in C^{\infty}(Q,M_{3}(\bR))$, such that $\sigma \coloneqq H\tau$ is also a closed
$\bR^{3}$-valued two-form.
At a given $s$, we show that the nearly parallel $\G_{2}$-structure with
$T^{3}$-symmetry can be recovered from the data $\tau$ and $H$, or equivalently
$\tau$ and~$\sigma$, under a particular geometric flow.
As a consequence of this inverse construction, we are able to assert the local
existence of nearly parallel $\G_{2}$-structures with $T^{4}$-symmetry, rather
than just $T^{3}$.

Nearly parallel $\G_{2}$-structures share some features with nearly K\"ahler
manifolds in dimension six \cite{gray-nk,gray}.
Indeed, the two geometries are the only exceptional cases in the classification
\cite[Theorem 5.14]{cleyton-swann-E} of geometries with metric connection whose
torsion is parallel and whose holonomy representation is irreducible.
Both geometries are Einstein of positive scalar curvature, and their cones have
special Riemannian holonomy \cite{gray-nk,russo,bar,bryant}.
A significant difference is that in the nearly K\"ahler case, there are currently
very few complete examples \cite{butruille,foscolo-haskins}.
In \cite{russo2, russo-swann} we studied nearly K\"ahler examples with
$T^{2}$-symmetry.
Again there is a multi-moment map and quotients of regular level sets are
three-manifolds with geometric structure, but it was not at all clear how that
structure would change when passing to the case of nearly parallel
$\G_{2}$-structures with $T^{3}$-symmetry.
The $\G_{2}$-case also turns to have a technical difficulty, in that it is not
immediately apparent that the flow equations preserve the symmetry of the matrix
$H$ determining the Riemannian structure on~$Q$.

The nearly K\"ahler case admits examples of larger symmetry rank, namely the
three-symmetric metric on $S^{3} \times S^{3} = \SU(2)^{3}/\SU(2)_{\Delta}$ has
$T^{3}$-symmetry.
Nearly K\"ahler six-manifolds with $T^{3}$-symmetry have been described locally by
Mororianu \& Nagy \cite{mororianu-nagy}, but at this time no further complete
examples have been found.
Similarly, in the nearly parallel $\G_{2}$-case, no complete examples with the
larger $T^{4}$-symmetry are currently known.

The structure of the paper is the following.
In Section~\ref{sec:torus-symmetry-and-multi-moment-map} we construct the
multi-moment map~$\nu$, prove that $\nu$ always has regular values and that the
three-torus $T^3$ acts freely on the corresponding level sets of~$\nu$.
We see that if $p$ is a critical point with $\nu(p)$ non-zero, then the
$T^{3}$-orbit through~$p$ is an associative and minimal submanifold.
As an example of the set-up, we consider structures on the round seven-sphere,
showing that the quotient $Q = \nu^{-1}(s)/T^{3}$, for $s$ any regular value, is
smoothly the three-sphere~$S^{3}$.
We also note that results of \cite{kawai} for the squashed metric on the
seven-sphere lead to the same conclusion.
We also give a brief description of the relations to Sasaki-Einstein and
$3$-Sasaki geometry in the cases (b) and (c) above in
Section~\ref{sec:special-classes}.
For a regular value $s$ of~$\nu$, we determine the induced geometry on the
three-manifold $Q = \nu^{-1}(s)/T^{3}$ in Section \ref{sec:torus-reduction}.
Section \ref{sec:inverse-construction} is devoted to an inverse construction.
Finally in Section~\ref{sec:exampl-consequences}, we discuss several local
constructions of nearly parallel $\G_{2}$-structures, a couple of particular
examples and the assertion that examples with $T^{4}$-symmetry exist locally.
Indeed we construct an incomplete example with a cohomogeneity-one action of
$T^{3} \times \SU(2)$.
Such examples have not been seen in previous discussion of cohomogeneity-one
examples \cite{cleyton-swann,podesta}.

\smallbreak \textbf{Acknowledgements.} The first named author is partially supported
by INdAM-GNSAGA, and by the PRIN 2022 Project (2022K53E57) -- PE1 -- \lq\lq Optimal
Transport: new challenges across analysis and geometry\rq\rq.
We thank Thomas Bruun Madsen for useful conversations on topics related to this
paper.

\section{Torus symmetry and multi-moment map}
\label{sec:torus-symmetry-and-multi-moment-map}

Let $M$ be a connected manifold of dimension seven.
A $\G_2$-structure on $M$ is a three-form $\varphi$ that induces a Riemannian metric
$g$ and volume form on $M$ via the formula
\begin{equation}
  \label{eq:g-vol}
  6g(X,Y)\vol \coloneqq (X \hook \varphi) \wedge (Y \hook \varphi) \wedge \varphi.
\end{equation}
We let $\star$ be the Hodge star operator induced by $g$ and the volume form.
Pointwise, $\varphi$ and $\star \varphi$ can be written in terms of a coframe
$e^1,\dots, e^7$ as
\begin{align}
  \label{eq:np-str1}
  \varphi & = e^{123}-e^1(e^{45}+e^{67})-e^2(e^{46}+e^{75})-e^3(e^{47}+e^{56}), \\
  \label{eq:np-str2}
  \star \varphi & = e^{4567}-e^{23}(e^{45}+e^{67})-e^{31}(e^{46}+e^{75})-e^{12}(e^{47}+e^{56}),
\end{align}
where the shorthand $e^{ijk}$ stands for $e^i \wedge e^j \wedge e^k$, etc.
We will use the same notational convention on different forms.
Notice that at each point $\vol = e^{12\dots 7}$,
$\varphi \wedge {\star \varphi} = 7\vol$, and $g=\sum_{k=1}^7 (e^k)^2$.
We recall that $\varphi$ and $g$ define a cross-product via the identity
\begin{equation*}
  g(X \times Y,Z) \coloneqq \varphi(X,Y,Z) .
\end{equation*}
A three-plane in $T_{x}M$ is \emph{associative} if it is closed under the
cross-product, see \cite[IV.1.A. Definition~1.3]{harvey-lawson}.

We will denote by $E_1,\dots,E_7$ the dual vectors of $e^1,\dots,e^7$.
The condition that $(M,\varphi)$ is a nearly parallel $\G_{2}$-manifold is that
$d\varphi$ is proportional to $\star \varphi$.
Up to a homothetic scaling, we can arrange that the nearly parallel condition
reads
\begin{equation*}
  d\varphi = 4\,{\star \varphi},
\end{equation*}
cf.\ \cite[p.~567]{bryant}.

Let $T^3$ be a three-dimensional torus acting on $(M,\varphi)$ effectively
preserving $\varphi$.
Let $U_1$, $U_2$, $U_3$ be the vector fields generated by the torus-action on
$M$.  The map
\begin{equation*}
  \nu \coloneqq \varphi(U_1,U_2,U_3)
\end{equation*}
is a multi-moment map, cf.\ Madsen-Swann \cite{madsen-swann}.
Indeed, $\nu$ is $T^3$-invariant and satisfies
\begin{equation*}
  d\nu = -4\,{\star \varphi}(U_1,U_2,U_3,{}\cdot{}).
\end{equation*}
The latter identity follows by Cartan's formula, the fact that the Lie
derivatives $\mathcal{L}_{U_i}\varphi$ vanish for each $i=1,2,3$, and
$[U_i,U_j]=0$ for all $i,j$.

Recall that $\G_{2}$ acts transitively on the Stiefel manifold of two-frames in
$\bR^7$ \cite{bryant}, and acts linearly on each tangent space of $M$ preserving
$\varphi$.
Note that $E_3 = E_1 \times E_2$, and that the subgroup of $\G_{2}$ preserving
$E_{1},E_{2}$, and hence $E_{3}$, is isomorphic to $\SU(2)$ that acts
transitively on the unit vectors orthogonal to $E_{1},E_{2},E_{3}$.
Thus locally, up to the action of $\G_{2}$, we can then take $E_1,\dots,E_7$
such that
\begin{align*}
  \begin{pmatrix}
    U_{1}\\
    U_{2}\\
    U_{3}
  \end{pmatrix}
  = V
  \begin{pmatrix}
    E_{1}\\
    E_{2}\\
    E_{3}\\
    E_{4}
  \end{pmatrix}
  ,\qquad\text{where}\quad
  V =
  \begin{pmatrix}
    p&0&0&0\\
    q_{1}&q_{2}&0&0\\
    r_{1}&r_{2}&r_{3}&r_{4}
  \end{pmatrix}
\end{align*}
for some functions $p$, $q_i$, $r_j$.  For such a basis, we have
\begin{equation}
  \label{eq:nu-pqr-dnu}
  \nu = pq_2r_3
  \quad\text{and}\quad
  d\nu = 4pq_2r_4e^7.
\end{equation}

\begin{proposition}
  \label{prop:critical}
  Critical points of $\nu$ may be characterised as follows:
  \begin{enumerate}
  \item\label{it:crit-zero} $x \in \nu^{-1}(0)$ is a critical point for $\nu$ if
    and only if $U_1,U_2,U_3$ are linearly dependent at $x$.
  \item\label{it:crit-nonzero} $x \in M \setminus \nu^{-1}(0)$ is a critical
    point for $\nu$ if and only if $\mathrm{Span}_{x}\{U_1,U_2,U_3\}$ is an
    associative three-plane.
  \end{enumerate}
  Thus $T^{3}$-orbits of critical points~$x$ with $\nu(x) \ne 0$ are
  associative, hence minimal, submanifolds.
\end{proposition}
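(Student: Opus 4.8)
The plan is to deduce everything from the adapted normal form in \eqref{eq:nu-pqr-dnu}: criticality of $\nu$ at a point~$x$, linear dependence of $U_1,U_2,U_3$ at~$x$, and associativity of their span are all pointwise conditions, so we may fix~$x$ and work with a $\G_2$-coframe in which $(U_1,U_2,U_3)^{T} = V(E_1,E_2,E_3,E_4)^{T}$ has the stated triangular shape.

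First I would note that, since the first row of $V$ is supported on column~$1$ and the second on columns~$1$ and~$2$, any $3\times3$ minor of $V$ not built from columns $1$ and~$2$ has a row of zeros apart from the first; hence the only possibly nonzero minors are $pq_2r_3$ and $pq_2r_4$, and so $U_1,U_2,U_3$ are linearly dependent at~$x$ exactly when both $pq_2r_3 = 0$ and $pq_2r_4 = 0$. As $\nu = pq_2r_3$ and $d\nu = 4pq_2r_4\,e^7$, part~\ref{it:crit-zero} drops out: on $\nu^{-1}(0)$ the first vanishing is automatic, so being critical ($pq_2r_4 = 0$) is the same as having both minors vanish, i.e.\ the $U_i$ being dependent.

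For part~\ref{it:crit-nonzero}, $\nu(x) = pq_2r_3 \ne 0$ forces $p,q_2,r_3 \ne 0$, so $x$ is critical precisely when $r_4 = 0$. If $r_4 = 0$, row-reducing $V$ gives $\mathrm{Span}_x\{U_1,U_2,U_3\} = \mathrm{Span}\{E_1,E_2,E_3\}$, and the $e^{123}$ term of \eqref{eq:np-str1} yields $E_1\times E_2 = E_3$, $E_2\times E_3 = E_1$, $E_3\times E_1 = E_2$, so this plane is associative. If $r_4 \ne 0$, the span is $\mathrm{Span}\{E_1,E_2,r_3E_3+r_4E_4\}$; the $e^{145}$ term of \eqref{eq:np-str1} gives $E_1\times E_4 = -E_5$, while $E_1\times E_3 = -E_2 \in \mathrm{Span}\{E_1,\dots,E_4\}$, so $E_1\times(r_3E_3+r_4E_4)$ has nonzero $E_5$-component $-r_4$ and leaves the span: the plane is not associative. (This is just the concrete shape of the criterion that the span of independent $u_1,u_2,u_3$ is associative iff the normal-valued cross product $\langle\chi(u_1,u_2,u_3),{}\cdot{}\rangle = {\star\varphi}(u_1,u_2,u_3,{}\cdot{})$ vanishes, i.e.\ iff $d\nu$ vanishes there; cf.\ \cite{harvey-lawson}.)

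Finally, for the closing assertion: $\nu$ being $T^3$-invariant, its critical set is $T^3$-invariant and $\nu$ is constant on orbits, so for a critical~$x$ with $\nu(x)\ne0$ every point of the orbit $T^3\cdot x$ is critical with nonzero $\nu$-value; this orbit is three-dimensional, and by part~\ref{it:crit-nonzero} its tangent plane $\mathrm{Span}\{U_1,U_2,U_3\}$ is associative at each point, so $T^3\cdot x$ is an associative submanifold and, being calibrated by~$\varphi$, is volume-minimising in its homology class, in particular minimal. None of these steps is deep once the normal form is in hand; the only point needing a little care is the degenerate case $\nu(x) = 0$, where the $U_i$ need not span a plane and the Grassmannian picture is unavailable — but there the minor computation settles matters directly.
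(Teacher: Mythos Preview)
Your arguments for parts~(\ref{it:crit-zero}) and~(\ref{it:crit-nonzero}) are correct and follow essentially the same route as the paper: both use the triangular normal form for $(U_1,U_2,U_3)$ and read off the conditions from $\nu=pq_2r_3$ and $d\nu=4pq_2r_4\,e^7$.  For the converse in~(\ref{it:crit-nonzero}) you compute the cross product directly in the frame, while the paper argues more abstractly that associativity forces $U_3 = aU_1+bU_2+c\,U_1\times U_2$; both are fine.

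There is, however, a genuine gap in your final paragraph.  You claim the associative orbit $T^3\cdot x$ is ``calibrated by~$\varphi$'' and hence volume-minimising in its homology class, so minimal.  But on a nearly parallel $\G_2$-manifold the three-form $\varphi$ is \emph{not} closed ($d\varphi = 4\,{\star\varphi} \ne 0$), so $\varphi$ is not a calibration in the Harvey--Lawson sense and the standard homological argument does not apply.  An associative three-plane still satisfies the pointwise comass condition $\varphi|_{T_xN} = \vol_N$, but without $d\varphi=0$ this does not yield minimality by itself.

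The paper fills this gap by passing to the Riemannian cone $(C(M),\overline g)$, on which the $\Spin(7)$ four-form $\Psi = dt\wedge\varphi + {\star\varphi}$ \emph{is} closed.  The cone $C(N)$ over an associative $N$ is then Cayley, hence genuinely calibrated by~$\Psi$ and therefore minimal in~$C(M)$; a short computation with the cone connection shows that the mean curvature of $N\subset M$ agrees with that of $C(N)\subset C(M)$ along $\{1\}\times N$, so $N$ is minimal.  You should either invoke this cone argument (or cite \cite{lotay,ball-madnick} for the result) rather than appeal to calibration by~$\varphi$ directly.
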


\begin{proof}
  In the following all expressions are at a fixed $x \in M$, and we use the
  local representation of $U_{i}$ above.

  For (\ref{it:crit-zero}), suppose $\nu(x)=0$ and $d\nu_{x}=0$. Then $pq_2=0$
  or $r_3^2+r_4^2=0$.
  It is then clear by the above pointwise expressions that $U_1,U_2,U_3$ are
  linearly dependent.
  The converse is obvious by the global expressions for $\nu$ and $d\nu$.

  Next, for (\ref{it:crit-nonzero}), assume $x$ is a critical point for $\nu$
  but $\nu(x) \ne 0$.
  We have that $pq_2r_{3} \neq 0$, $r_4=0$, and the vectors $U_1,U_2,U_3$ are
  linearly independent, by part~(\ref{it:crit-zero}).
  Thus the span of $U_{1},U_{2},U_{3}$ is that of $E_{1},E_{2},E_{3}$, which is
  associative.

  Conversely, if the span is three-dimensional and associative, then the local
  form gives that $U_{3}$ is a non-zero linear combination
  $U_3 = aU_1+bU_2+c\,U_1 \times U_2$, with $c \neq 0$.
  Thus $r_4=0$, so $d\nu_{x} = 0$ and $x$~is critical.
  Also,
  $\nu(x) = c\varphi_x(U_1,U_2,U_1 \times U_2) = c\lVert U_1 \times U_2\rVert^2
  \neq 0$, as $U_1$ and $U_2$ are linearly independent.

  Minimality of associative submanifolds of nearly parallel $\G_{2}$-manifolds
  was noted in \cite[Corollary~3.6]{lotay} and~\cite{ball-madnick}, for example:
  a submanifold $N \subset M$ is associative if and only if the cone
  $C(N) = \bR_{>0} \times N \subset (C(M), \overline{g} = dt^{2} + t^{2}g)$ is
  calibrated for the cone $\Spin(7)$-structure given by
  $\Psi = dt \wedge \varphi + {\star \varphi}$.
  Thus $C(N)$ is minimal in the cone by \cite{harvey-lawson}.
  But $C(N) \subset C(M)$ and $N \subset M$ have identical mean curvature
  vectors on $N = \{1\} \times N$, since
  $\overline{\nabla}_{\partial_{t}}\partial_{t} = 0$ and
  $(\overline{\nabla}_{X}X)^{\perp C(M)} = (\nabla_{X}X +
  (1/t)g(X,X)\partial_{t})^{\perp C(M)}_{t=1} = (\nabla_{X}X)^{\perp M}$ for
  $X \in N$ extended by $X_{(t,p)} = X_{p}$.
\end{proof}

Let us write $H_{ij} = g(U_i,U_j)$ and $H = (H_{ij})_{i,j=1,2,3}$.
So $H = VV^{T}$, and
\begin{equation*}
  h \coloneqq \det H = p^{2}q_{2}^{2}\tilde{r}^{2},
\end{equation*}
for
\begin{equation*}
  \tilde{r}^{2} = r_{3}^{2} + r_{4}^{2}.
\end{equation*}
From \eqref{eq:nu-pqr-dnu}, we get
$\det H = \nu^2+\frac{1}{16}\lvert d\nu\rvert^2$, or equivalently
\begin{equation*}
  \rho \coloneqq h - \nu^{2} = \frac{1}{16}\lvert d\nu\rvert^{2}.
\end{equation*}
This implies that $\nu$ cannot be identically zero.
If it were, then $h = \det H \equiv 0$ and the vectors $U_1$, $U_2$, $U_3$ would
be linearly dependent at each point; but this contradicts the effectiveness of
the $T^{3}$-action.  We show later that $\nu$ cannot be constant in general.

Let $\theta_i$ be the dual one-form to $U_i$, and write
$\theta = (\theta_{1},\theta_{2},\theta_{3})^{T}$,
$U^{\flat} = (U_{1}^{\flat}, U_{2}^{\flat}, U_{3}^{\flat})^{T}$.  Then
\begin{align*}
  H\theta = U^{\flat}.
\end{align*}
We define $\alpha = (\alpha_{1},\alpha_{2},\alpha_{3})^{T}$, a vector of basic
$T^{3}$-invariant one-forms, by
\begin{equation}
  \label{eq:horizontal-forms}
  \alpha_i \coloneqq \nu \theta_i - \varphi(U_j,U_k,{}\cdot{})
\end{equation}
for $(ijk)=(123)$ as permutations.
A calculation yields the following pointwise expressions
\begin{equation}
  \label{eq:local-theta-alpha}
  \theta = X
  \begin{pmatrix}
    e^{1}\\
    e^{2}\\
    e^{3}\\
    e^{4}
  \end{pmatrix},\quad
  \alpha = pq_{2}r_{4}\,X
  \begin{pmatrix}
    e^{6}\\
    -e^{5}\\
    e^{4}\\
    -e^{3}
  \end{pmatrix},
\end{equation}
where
\begin{equation*}
  X = \frac{1}{pq_{2}\tilde{r}^{2}}
  \begin{pmatrix}
    q_{2} & - q_{1} & q_{1}r_{2} - q_{2}r_{1}  \\
    0 & p & - pr_{2}  \\
    0 & 0 & pq_{2}
  \end{pmatrix}
  \begin{pmatrix}
    \tilde{r}^{2} & 0 & 0 & 0 \\
    0 & \tilde{r}^{2} & 0 & 0 \\
    0 & 0 & r_{3} & r_{4}
  \end{pmatrix}
  .
\end{equation*}
As $X$ has rank~$3$, we see that away from the critical points of~$\nu$, the
seven one-forms $d\nu$, $\theta_{i}$, $\alpha_{j}$, $i,j=1,2,3$, constitute a
basis of the cotangent space of~$M$.

\begin{proposition}
  The image of the multi-moment map $\nu \colon M \to \bR$ is an interval with
  non-empty interior.
  If $M$ is complete, then $\nu(M) = [a,b]$ is a compact interval with $a<b$.
\end{proposition}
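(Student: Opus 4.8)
The plan is to separate the two assertions. Since $M$ is connected and $\nu$ is continuous, $\nu(M)$ is an interval, so the first assertion amounts to showing that $\nu$ is non-constant. For the second, a nearly parallel $\G_{2}$-metric is Einstein with positive scalar curvature \cite{besse,bryant1}, so if $M$ is complete it is compact by Bonnet--Myers; then $\nu(M)$ is a compact interval, which by the first part is non-degenerate, hence of the form $[a,b]$ with $a<b$. So everything reduces to showing that $\nu$ is not constant.

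We already know $\nu\not\equiv0$ (otherwise $h=\det H\equiv0$, making $U_{1},U_{2},U_{3}$ everywhere dependent, against effectiveness), so it remains to exclude $\nu\equiv c$ with $c\ne0$. Assume this. Then $d\nu\equiv0$, hence $\rho=h-\nu^{2}=\tfrac{1}{16}\lvert d\nu\rvert^{2}=p^{2}q_{2}^{2}r_{4}^{2}\equiv0$ by \eqref{eq:nu-pqr-dnu}; since $\nu=pq_{2}r_{3}=c\ne0$ forces $pq_{2}\ne0$, this gives $r_{4}\equiv0$. By \eqref{eq:local-theta-alpha} the vanishing of $r_{4}$ makes $\alpha\equiv0$, so
\begin{equation*}
  \varphi(U_{j},U_{k},{}\cdot{}) = \nu\,\theta_{i} = c\,\theta_{i},\qquad (ijk)=(123)\text{ cyclically.}
\end{equation*}
(In passing, $h\equiv c^{2}>0$ here, so the orbits are everywhere three-dimensional and, being critical orbits of $\nu$, associative by Proposition~\ref{prop:critical}; this is not needed below.)

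The key step is to differentiate this identity. Using $\mathcal{L}_{U_{i}}\varphi=0$, $[U_{i},U_{j}]=0$, $d\varphi=4{\star\varphi}$ and Cartan's formula, one gets $d\bigl(\varphi(U_{j},U_{k},{}\cdot{})\bigr)=4\,{\star\varphi}(U_{j},U_{k},{}\cdot{})$, whence $c\,d\theta_{i}=4\,{\star\varphi}(U_{j},U_{k},{}\cdot{})$. I would then pass to the adapted coframe with $r_{4}=0$, in which $\theta_{i}\in\mathrm{Span}\{e^{1},e^{2},e^{3}\}$ and each ${\star\varphi}(U_{j},U_{k},{}\cdot{})$ lies in $W\coloneqq\mathrm{Span}\{\omega_{1},\omega_{2},\omega_{3}\}$ with $\omega_{1}=e^{45}+e^{67}$, $\omega_{2}=e^{46}+e^{75}$, $\omega_{3}=e^{47}+e^{56}$, noting $\omega_{a}\wedge\omega_{b}=2\delta_{ab}\,e^{4567}$. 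Projecting $c\,d\theta_{i}=4\,{\star\varphi}(U_{j},U_{k},{}\cdot{})$ onto $W$ and inserting the explicit expressions for $\theta$ and for ${\star\varphi}(U_{j},U_{k},{}\cdot{})$ in terms of $p,q_{i},r_{j}$, a short computation shows that the $W$-component of $de^{a}$ equals $-4\,\omega_{a}$ for $a=1,2,3$. Now write $\varphi=e^{123}-\sum_{a}e^{a}\wedge\omega_{a}$ and ${\star\varphi}=e^{4567}-\sum_{a}e^{[jk]_{a}}\wedge\omega_{a}$ (with $[jk]_{1}=23$, $[jk]_{2}=31$, $[jk]_{3}=12$): the only contribution to the $e^{4567}$-component of $d\varphi$ comes from $-\sum_{a}de^{a}\wedge\omega_{a}$, and only from the $W$-parts of the $de^{a}$, giving $-\sum_{a}(-4\omega_{a})\wedge\omega_{a}=24\,e^{4567}$, whereas that of $4{\star\varphi}$ is $4\,e^{4567}$. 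This contradicts $d\varphi=4{\star\varphi}$, so $\nu$ is non-constant and the proposition follows.

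The step I expect to be the crux is the computation that the $W$-component of $de^{a}$ equals $-4\omega_{a}$: this is where the off-diagonal data $p,q_{i},r_{j}$ must be seen to cancel — concretely, that the coefficient matrix of $(\theta_{1},\theta_{2},\theta_{3})$ in the basis $e^{1},e^{2},e^{3}$ is $-\tfrac1c$ times that of $\bigl({\star\varphi}(U_{2},U_{3},{}\cdot{}),{\star\varphi}(U_{3},U_{1},{}\cdot{}),{\star\varphi}(U_{1},U_{2},{}\cdot{})\bigr)$ in the basis $\omega_{1},\omega_{2},\omega_{3}$ — after which the contradiction is pure bookkeeping, and the rest of the argument is formal.
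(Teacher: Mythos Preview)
Your argument is correct, and the crux you flag is in fact clean. With $r_4=0$ the $U_i$ lie in $\mathrm{Span}\{E_1,E_2,E_3\}$, so contracting $\varphi=e^{123}-\sum_a e^a\wedge\omega_a$ and $\star\varphi=e^{4567}-\sum_a e^{[jk]_a}\wedge\omega_a$ against $U_j,U_k$ yields directly
\[
  c\,\theta_i=\varphi(U_j,U_k,\cdot)=\sum_a e^{[jk]_a}(U_j,U_k)\,e^a,\qquad
  \star\varphi(U_j,U_k,\cdot)=-\sum_a e^{[jk]_a}(U_j,U_k)\,\omega_a,
\]
so the two coefficient matrices are manifest negatives of one another; no tracking of $p,q_i,r_j$ is needed. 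Invertibility of this matrix follows from $h=c^2\ne0$, whence $(de^a)_W=-4\omega_a$ and the $e^{4567}$-coefficient comparison gives $24\ne4$.

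Your route is genuinely different from the paper's. The paper records the identity $\star d\nu=\tfrac{4h}{\rho}\,\theta_{123}\wedge\alpha_{123}$ and asserts that if $\nu$ were constant then $h\equiv0$, contradicting effectiveness. This is much shorter, but the passage from the displayed identity to $h\equiv0$ is not transparent in the case $\nu\equiv c\ne0$: there one has $\rho\equiv0$ and $\alpha\equiv0$ simultaneously (since $r_4\equiv0$), so both sides degenerate and the identity gives no information as stated. Your approach trades this brevity for an explicit local computation that produces an unambiguous numerical contradiction with $d\varphi=4\star\varphi$; it also makes visible exactly where the nearly parallel constant $4$ enters.
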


\begin{proof}
  For the first point, since $M$ is connected, it is enough to show that $\nu$
  is not constant on $M$.
  A computation using the pointwise expressions of the forms gives
  \begin{equation*}
    \star d\nu = \frac{4h}{\rho}\theta_{123} \wedge \alpha_{123}.
  \end{equation*}
  In particular, if $\nu$ were constant, then $h = \det H$ should vanish
  identically, and hence $U_1$, $U_2$, $U_3$ would be linearly dependent at each
  point of $M$.
  This contradicts the effectiveness of the torus-action.
  When $M$ is complete it is compact, by Myers' Theorem, so $\nu(M)$ is also
  compact.
\end{proof}

\begin{proposition}
  The multi-moment map $\nu$ has regular values.

  For any regular value $s$, the $T^3$-action on $\nu^{-1}(s)$ is free, so
  $\nu^{-1}(s) \to \nu^{-1}(s)/T^3$ is a principal torus-bundle over a
  three-dimensional smooth manifold.
\end{proposition}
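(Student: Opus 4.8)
The plan has two essentially independent parts. For the existence of regular values I would simply combine Sard's theorem with the previous Proposition: the critical values of $\nu$ form a null subset of $\bR$, while $\nu(M)$ contains a non-empty open interval, so there is a regular value $s$ with $\nu^{-1}(s)\neq\emptyset$. (It is worth recording that ``regular value'' is meant here in the substantive sense, with non-empty preimage, which is exactly what the non-empty interior of $\nu(M)$ supplies.)

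For the freeness claim I fix a regular value $s$ and a point $x\in\nu^{-1}(s)$, and the first step is to understand the orbit plane $P\coloneqq\mathrm{Span}_{x}\{U_{1},U_{2},U_{3}\}$. Since $s$ is regular, $d\nu_{x}\neq 0$, and from $\det H=\nu^{2}+\frac{1}{16}\lvert d\nu\rvert^{2}$ one gets $h(x)>0$, so $U_{1},U_{2},U_{3}$ are independent at $x$ and $P$ is three-dimensional. In the adapted local frame $d\nu=4pq_{2}r_{4}e^{7}$ with $pq_{2}\neq 0$, so regularity forces $r_{4}\neq 0$; hence $P=\mathrm{Span}\{E_{1},E_{2},r_{3}E_{3}+r_{4}E_{4}\}$ does not contain $E_{3}=E_{1}\times E_{2}$, and therefore $P$ is \emph{not} associative.

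The second step is a rigidity argument inside $\G_{2}$. Let $\gamma$ lie in the stabiliser of $x$ in $T^{3}$. Since the action preserves $\varphi$, hence $g$, $\gamma$ is an isometry of the connected manifold $M$ fixing $x$, so $d\gamma_{x}$ lies in the copy of $\G_{2}$ in $\GL(T_{x}M)$ cut out by $\varphi_{x}$; it fixes each $U_{i}\rvert_{x}$, hence $P$ pointwise, hence $E_{1}$ and $E_{2}$, hence $E_{3}=E_{1}\times E_{2}$, hence $r_{4}E_{4}$, and so $E_{4}$ because $r_{4}\neq 0$. But the subgroup of $\G_{2}$ fixing $E_{1},E_{2},E_{3}$ is the $\SU(2)$ acting transitively --- and, by simple connectedness of $S^{3}$, simply transitively --- on the unit vectors orthogonal to $E_{1},E_{2},E_{3}$, so only its identity fixes $E_{4}$. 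Hence $d\gamma_{x}=\Id$, and an isometry of a connected Riemannian manifold with identity $1$-jet at a point is the identity, so $\gamma=\Id$. Thus $T^{3}$ acts freely; freeness and properness (automatic for the compact $T^{3}$) turn $\nu^{-1}(s)\to\nu^{-1}(s)/T^{3}$ into a principal $T^{3}$-bundle over a smooth base of dimension $6-3=3$.

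The step I expect to carry the real content is the upgrade from a \emph{locally} free action to a genuinely free one: independence of the $U_{i}$ on $\nu^{-1}(s)$ only excludes positive-dimensional isotropy, and it is the non-associativity of $P$ --- a consequence of $s$ being a \emph{regular} value, via $d\nu_{x}\neq 0$ and hence $r_{4}\neq 0$ --- combined with the transitive $\SU(2)$-action inside $\G_{2}$ that kills non-trivial finite stabilisers. The remaining ingredients (Sard, the identity $\det H=\nu^{2}+\frac{1}{16}\lvert d\nu\rvert^{2}$, and the slice-theorem packaging of the quotient) are routine.
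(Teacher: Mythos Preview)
Your argument is correct and follows the same overall architecture as the paper's: Sard plus the non-empty interior of $\nu(M)$ for part one; for part two, show that any $\gamma$ in the stabiliser of $x$ has $d\gamma_{x}=\Id$, then invoke isometry rigidity on the connected $M$ together with effectiveness.

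The genuine difference is in how you obtain $d\gamma_{x}=\Id$. The paper uses the $T^{3}$-invariant coframe $\{d\nu,\theta_{1},\theta_{2},\theta_{3},\alpha_{1},\alpha_{2},\alpha_{3}\}$, established just before the Proposition to be a basis of $T^{*}_{x}M$ at every regular point: since $\gamma\in T^{3}$ preserves each of these seven one-forms and fixes $x$, the differential $d\gamma_{x}$ fixes the whole cotangent space. Your route instead works inside the pointwise $\G_{2}$-stabiliser: $d\gamma_{x}\in\G_{2}$ fixes $U_{1},U_{2},U_{3}$, hence $E_{1},E_{2}$, hence $E_{3}=E_{1}\times E_{2}$, and then the regularity condition $r_{4}\neq 0$ forces $E_{4}$ to be fixed as well; the residual $\SU(2)$ acts simply transitively on the orthogonal $S^{3}$, so $d\gamma_{x}=\Id$. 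Your approach is more structural and does not rely on the explicit computation~\eqref{eq:local-theta-alpha} showing that the $\alpha_{j}$ complete $d\nu,\theta_{i}$ to a coframe; the paper's approach is shorter once that coframe is in hand and has the mild advantage of not singling out the adapted frame $E_{1},\dots,E_{7}$. Both are clean; yours also makes explicit the geometric reason freeness can fail at non-regular points with $\nu\neq 0$, namely associativity of the orbit plane.
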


\begin{proof}
  By Sard's Theorem, the set of critical values has zero Lebesgue measure
  in~$\bR$, so there are infinitely many regular values for $\nu$ in
  $(a,b) \neq \varnothing$.

  Let us fix a regular value $s$, and let $x$ be any point in the level set
  $\nu^{-1}(s)$.
  Then $d\nu_x \neq 0$, so $U_1$, $U_2$, $U_3$ are linearly independent at $x$,
  and the torus-action is locally free.

  We show that the action is free.
  If $K$ is the stabiliser in $T^3$ of a point $x \in \nu^{-1}(s)$, then it
  preserves $d\nu$, $\theta_i$, and $\alpha_j$ for all $i,j=1,2,3$, so all of
  $T_xM$.
  By the equivariant tubular neighbourhood theorem, the set
  $A \coloneqq \{y \in M: ky=y, dk_y=\mathrm{Id}_{T_yM} \text{ for all } k \in
  K\}$ is open and closed in $M$.
  Since $M$ is connected and $A$ is non-empty, $A=M$.
  But then $K$ must be trivial, otherwise the torus-action on $M$ would not be
  effective.
\end{proof}

\subsection{The seven sphere}

We consider first the round seven-sphere $S^{7} \subset V = \bR^{8}$ as a nearly
parallel $\G_{2}$-structure, cf.\ \cite{alexandrov,lotay}.
Let $(x^{0},\dots,x^{7})$ be the standard Euclidean coordinates,
$\partial_{i}$~the corresponding vector fields, and consider
$\bR^{7} \subset \bR^{8}$ spanned by $x^{1},\dots,x^{7}$.
On $\bR^{7}$, we have the constant coefficient $\G_{2}$-form $\varphi_{0}$ and
its dual ${\star}_{\bR^{7}}\varphi_{0}$ given by \eqref{eq:np-str1} and
\eqref{eq:np-str2} with $e^{i} = dx^{i}$.
Defining
$\Psi \coloneqq dx^{0} \wedge \varphi_{0} + {\star}_{\bR^{7}}\varphi_{0}$, we
get a four-form on~$\bR^{8}$, whose stabiliser under the action of $\GL(8,\bR)$
is $\Spin(7) \subset \SO(8)$.  We get $S^{7}=\Spin(7)/\G_{2}$.

Note that the Euler vector field $E = \sum_{k=0}^7 x^k\partial_k$ on $\bR^{8}$
restricts to a unit normal~$N$ of $S^{7}$ and $\mathcal{L}_{E}x^{i} = x^{i}$ for
each~$i$.
As $\Psi$ has constant coefficients, we get $\mathcal{L}_{E}\Psi = 4\Psi$.
Putting
\begin{equation*}
  \varphi \coloneqq \iota^*(E \hook \Psi), \qquad \psi \coloneqq \iota^*\Psi,
\end{equation*}
we get from $d\Psi = 0$ that
$d\varphi = \iota^{*}(d (E\hook \Psi)+E \hook d\Psi) = \iota^{*}(\mathcal{L}_{E}\Psi) = \iota^{*}(4\Psi) = 4\psi$.
But $\Spin(7)$ acts by isometries on~$S^{7}$, and at $(1,0,\dots,0)$,
$\psi = {\star}_{\bR^{7}}\varphi_{0} = {\star}\varphi$.
Thus $\psi$ is the Hodge dual of~$\varphi$, and so $\varphi$ equips $S^{7}$ with
a nearly parallel $\G_{2}$-structure for which the induced metric is the round
one.

On $\bR^{8}$ we have four commuting circle actions generated by
\begin{equation*}
  V_i \coloneqq -x^{2i+1}\partial_{2i}+x^{2i}\partial_{2i+1},\quad\text{for $i=0,1,2,3$.}
\end{equation*}
Write
\begin{equation*}
  \Psi = dx^{0123}-\gamma_1\wedge \beta_5-\gamma_2\wedge \beta_6-\gamma_3\wedge \beta_7+dx^{4567},
\end{equation*}
where $\gamma_1 = dx^{01}+dx^{23}$, $\gamma_2=dx^{02}+dx^{31}$,
$\gamma_3=dx^{03}+dx^{12}$, and $\beta_5=dx^{45}+dx^{67}$,
$\beta_6=dx^{46}+dx^{75}$, $\beta_7=dx^{47}+dx^{56}$.  Then, we have
\begin{equation*}
  \mathcal{L}_{V_i}\Psi = -\mathcal{L}_{V_i}(\gamma_2\wedge \beta_6+\gamma_3\wedge \beta_7) = \varepsilon_{i}(\gamma_3 \wedge \beta_6-\gamma_2\wedge \beta_7), \qquad
  i=0,1,2,3,
\end{equation*}
where $\varepsilon_{0} = -1 = \varepsilon_{1}$,
$\varepsilon_{2} = +1 = \varepsilon_{3}$.  So the three vector fields
\begin{equation*}
  U_1 \coloneqq V_0+V_3,\quad U_2 \coloneqq V_1+V_3,\quad U_3 \coloneqq V_2-V_3
\end{equation*}
preserve $\Psi$.
They generate the maximal torus~$T^{3}$ of $\Spin(7)$.
The action then preserves the nearly parallel $\G_{2}$-structure $\varphi$
on~$S^7$, and we get a multi-moment map~$\nu$.  A direct computation gives
\begin{align*}
  \nu & = \varphi(U_1,U_2,U_3) \\
      & = \iota^{*}(\Psi(E,V_0+V_3,V_1,V_2)-\Psi(E,V_0,V_1+V_2,V_3)) \\
      & = 4((x^0x^2-x^1x^3)(x^4x^7+x^5x^6)+(x^1x^2+x^0x^3)(x^5x^7-x^4x^6)).
\end{align*}
Note that writing $z^1=x^0-ix^1$, $z^2=x^2-ix^3$, $z^3=x^4+ix^5$,
$z^4=x^6+ix^7$, we have
\begin{equation}
  \label{eq:nu-S7}
  \nu = 4\operatorname{Im}(z^{1}z^{2}z^{3}z^{4}).
\end{equation}

\begin{proposition}
  \label{prop:round-S7}
  For the above nearly parallel $\G_{2}$-structure on the round seven-sphere,
  the image of the multi-moment map~\eqref{eq:nu-S7} is
  $\nu(S^{7}) = [-1/4,1/4]$.

  The only non-zero critical values are the maximum and minimum $\pm1/4$, and
  the corresponding critical sets are diffeomorphic to~$T^{3}$.

  The $T^{3}$-quotient of the critical points in $\nu^{-1}(0)$ form a trivalent
  graph with four vertices and six edges connecting each pair of distinct
  vertices.

  Each $s \in (-1/4,0) \cup (0,1/4)$ is a regular value of~$\nu$, and
  $\nu^{-1}(s)/T^{3}$ is diffeomorphic to a three-sphere.
\end{proposition}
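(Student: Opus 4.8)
The plan is to analyse the explicit formula $\nu = 4\operatorname{Im}(z^{1}z^{2}z^{3}z^{4})$ on $S^{7} \subset \bC^{4}$, where $S^{7} = \{\lvert z^{1}\rvert^{2} + \dots + \lvert z^{4}\rvert^{2} = 1\}$. First I would use the arithmetic–geometric mean inequality: $\lvert z^{1}z^{2}z^{3}z^{4}\rvert \le (\lvert z^{1}\rvert^{2}+\dots+\lvert z^{4}\rvert^{2})^{2}/16 = 1/16$, so $\lvert\nu\rvert \le 1/4$, with equality forcing $\lvert z^{k}\rvert^{2} = 1/4$ for all $k$ and $z^{1}z^{2}z^{3}z^{4}$ purely imaginary. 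This shows the image is contained in $[-1/4,1/4]$; connectedness of $S^{7}$ and the fact that $\nu$ takes both signs gives the full interval, and identifies the preimages $\nu^{-1}(\pm 1/4)$ as the sets $\{\lvert z^{k}\rvert = 1/2,\ \arg z^{1} + \arg z^{2} + \arg z^{3} + \arg z^{4} \equiv \pm\pi/2\}$, each a single $T^{4}$-orbit of dimension three, hence diffeomorphic to $T^{3}$; I would check directly these are the only non-zero critical points (e.g.\ via a Lagrange-multiplier computation, or by noting that at a non-zero critical point the $T^{3}$-orbit is associative by Proposition~\ref{prop:critical}, and the associative orbits here can be enumerated).

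Next I would handle $\nu^{-1}(0)$. Here $z^{1}z^{2}z^{3}z^{4} \in \bR$, which happens iff the sum of the arguments is $\equiv 0 \pmod \pi$ or some $z^{k} = 0$. The critical points in $\nu^{-1}(0)$, by Proposition~\ref{prop:critical}\eqref{it:crit-zero}, are exactly where $U_{1},U_{2},U_{3}$ are linearly dependent; since the $V_{i}$ vanish precisely on the coordinate complex lines $z^{i} = 0$, a short linear-algebra check on the coefficient matrix expressing $(U_{1},U_{2},U_{3})$ in terms of $(V_{0},V_{1},V_{2},V_{3})$ shows degeneracy occurs exactly when at least one $z^{k} = 0$, i.e.\ on $\bigcup_{k} \{z^{k}=0\}$. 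On $\{z^{k} = 0\}$ we have an $S^{5}$ with a residual torus; the $T^{3}$-quotient of such a $\{z^{k}=0, z^{\ell}=0\}$ stratum is a point (an $S^{3}$ with a $3$-torus, but the torus acts with a one-dimensional stabiliser — more carefully, one computes that $\{z^{k}=0\}\cap\{z^{\ell}=0\}/T^{3}$ is a circle's worth... ) — I should instead say: the singular level $\nu^{-1}(0)$ contains the four five-spheres $S_{k} = \{z^{k}=0\}$, their pairwise intersections are the six three-spheres $\{z^{k}=z^{\ell}=0\}$, and one verifies that $S_{k}/T^{3}$ is an edge (interval) while $\{z^{k}=z^{\ell}=0\}/T^{3}$ is a vertex, giving the trivalent graph with four vertices $\{k\ell\}$-type... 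Let me restate: the quotient of the critical set in $\nu^{-1}(0)$ is stratified with vertices the images of the six $\{z^{k}=z^{\ell}=0\}$ and edges... The cleanest route is to observe that $\bigcup_k S_k / T^3$ deformation retracts onto the union of the images of the $\{z^k = z^\ell = 0\}$, organise the combinatorics, and match it with ``four vertices, six edges, each pair of vertices joined'' — that is $K_{4}$. I would present this as a direct computation of the images of the strata under the quotient map $\bC^{4} \to \bC^{4}/T^{4}$ restricted to $S^{7}$.

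Finally, for the main statement, fix a regular value $s \in (0,1/4)$ (the case $s < 0$ is symmetric under conjugation $z \mapsto \bar z$, which reverses the sign of $\nu$). I would show $\nu^{-1}(s)/T^{3} \cong S^{3}$ by exhibiting an explicit diffeomorphism. The cleanest approach: the invariant functions $(\lvert z^{1}\rvert^{2}, \lvert z^{2}\rvert^{2}, \lvert z^{3}\rvert^{2}, \lvert z^{4}\rvert^{2})$ together with $\operatorname{Re}(z^{1}z^{2}z^{3}z^{4})$ and $\operatorname{Im}(z^{1}z^{2}z^{3}z^{4}) = \nu/4$ generate the invariant ring; on the regular level $\nu = s$ with all $z^{k} \ne 0$ (which holds since $s \ne 0$), the quotient is cut out inside the moment simplex $\{t_{k} \ge 0,\ \sum t_{k} = 1\}$ by the relation $\operatorname{Re}^{2} + (s/4)^{2} = t_{1}t_{2}t_{3}t_{4}$, a single equation. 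I expect the main obstacle to be showing this quotient is smoothly $S^{3}$ rather than merely homeomorphic or homotopy-equivalent: I would argue that $\nu^{-1}(s)$, being a compact $T^{3}$-bundle over a simply connected base (forced by the $S^{7}$ being $2$-connected and a Gysin/homotopy-exact-sequence argument giving $\pi_{1}(Q) = 0$, $H_{2}(Q;\bZ) = 0$) with a free torus action, has a base that is a simply connected closed $3$-manifold, hence $S^{3}$ by Perelman; alternatively, and more elementarily, I would use the function $\operatorname{Re}(z^{1}z^{2}z^{3}z^{4})$ restricted to $Q$ as a Morse function with exactly two critical points (a maximum and a minimum, both where $z^{1}z^{2}z^{3}z^{4}$ is real and positive resp.\ negative with the appropriate modulus constraint), and conclude $Q \cong S^{3}$ by Reeb's theorem. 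I would also note that the cited result of Kawai on the squashed $S^{7}$ gives an independent confirmation of the $S^{3}$ identification.
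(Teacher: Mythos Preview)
Your identification of the critical set inside $\nu^{-1}(0)$ is wrong, and this is what causes the muddle in your graph analysis. You claim that $U_{1},U_{2},U_{3}$ become linearly dependent precisely when some $z^{k}=0$, but this is false. The $V_{i}$ are pairwise orthogonal with $\lvert V_{i}\rvert^{2}$ equal to the squared modulus of the corresponding complex coordinate, and a direct computation of the Gram determinant of $U_{1},U_{2},U_{3}$ gives
\[
\det H \;=\; \lvert z^{1}\rvert^{2}\lvert z^{2}\rvert^{2}\lvert z^{3}\rvert^{2}
+\lvert z^{1}\rvert^{2}\lvert z^{2}\rvert^{2}\lvert z^{4}\rvert^{2}
+\lvert z^{1}\rvert^{2}\lvert z^{3}\rvert^{2}\lvert z^{4}\rvert^{2}
+\lvert z^{2}\rvert^{2}\lvert z^{3}\rvert^{2}\lvert z^{4}\rvert^{2},
\]
which vanishes if and only if at least \emph{two} of the $z^{k}$ vanish. (For instance, if only $z^{4}=0$ then $U_{1}=V_{0}$, $U_{2}=V_{1}$, $U_{3}=V_{2}$ are independent.) So the critical locus in $\nu^{-1}(0)$ is the union of the six three-spheres $\{z^{i}=z^{j}=0\}$, not the four five-spheres $\{z^{k}=0\}$. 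The $T^{3}$-quotient of each such $S^{3}$ is an interval (an edge), and the four triple-intersection circles $C_{\ell}=\{z^{k}=0,\ k\neq\ell\}$ give the four vertices; this is how the paper obtains the $K_{4}$ graph. Your attempt to force $K_{4}$ out of the wrong stratification could not have succeeded.

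For the identification $\nu^{-1}(s)/T^{3}\cong S^{3}$, the paper takes a route different from any of yours: it computes the Hessian of $\nu$ at the extremal orbits $\nu^{-1}(\pm 1/4)$, checks it has maximal transverse rank~$4$, and invokes the equivariant Morse lemma to get $\nu^{-1}(s)\cong T^{3}\times S^{3}$ for $s$ near~$\pm 1/4$; absence of intermediate critical values then propagates this to all of $(-1/4,0)\cup(0,1/4)$. This is more elementary than appealing to Perelman, and avoids the gap in your simply-connected argument (the inclusion $\nu^{-1}(s)\hookrightarrow S^{7}$ does not by itself control $\pi_{1}(\nu^{-1}(s))$). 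Your Reeb-theorem idea via $\operatorname{Re}(z^{1}z^{2}z^{3}z^{4})$ is attractive but you have not verified that this function has exactly two critical points on the quotient. Your AM--GM argument for the image and the description of $\nu^{-1}(\pm 1/4)$ are fine and essentially equivalent to the paper's Lagrange-multiplier computation in a slice.
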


\begin{proof}
  Using the action of~$T^{3}$, we move any given point to one with
  $x^{1} = 0 = x^{3} = x^{5}$ and $x^{0} \ge 0$, $x^{2} \ge 0$, $x^{4} \ge 0$.
  At points with $x^{1} = 0 = x^{3} = x^{5}$, we have in $\bR^{8}$ that
  \begin{equation*}
    \begin{split}
      d\nu
      &= 4(x^{2}x^{4}x^{7}dx^{0} + x^{0}x^{4}x^{7}dx^{2}
        + x^{0}x^{2}x^{7}dx^{4} \eqbreak
        + x^{0}x^{2}x^{4}dx^{7} + x^{0}x^{2}x^{6}dx^{5}
        - x^{2}x^{4}x^{6}dx^{1} -  x^{0}x^{4}x^{6}dx^{3}).
    \end{split}
  \end{equation*}
  Critical points for $\nu$ on~$S^{7}$ in the slice $x^{1} = 0 = x^{3} = x^{5}$,
  have $d\nu$ proportional to
  $\sum_{i=0}^{7}x^{i}dx^{i} = \sum_{i=0,2,4,6,7}x^{i}dx^{i}$.  So
  \begin{equation}
    \label{eq:critical}
    \begin{gathered}
      x^{2}x^{4}x^{6} = 0,\quad
      x^{0}x^{4}x^{6} = 0,\quad
      x^{0}x^{2}x^{6} = 0,\quad
      0 = \lambda x^{6}, \\
      x^{2}x^{4}x^{7} = \lambda x^{0},\quad
      x^{0}x^{4}x^{7} = \lambda x^{2},\quad
      x^{0}x^{2}x^{7} = \lambda x^{4},\quad
      x^{0}x^{2}x^{4} = \lambda x^{7}.
    \end{gathered}
  \end{equation}
  If $\nu\ne0$, then $x^{j} \ne 0$ for $j = 0,2,4,7$.
  It follows that $\lambda \ne 0$ and thus $x^{6} = 0$.
  Also, we have
  $(x^{0})^{2} = x^{0}x^{2}x^{4}x^{7}/\lambda = (x^{2})^{2} = (x^{4})^{2} =
  (x^{7})^{2}$.
  But $\sum_{j=0,2,4,7} (x^{j})^{2} = 1$, so each $(x^{j})^{2}$ is $1/4$ and
  $\nu = \pm1/4$.
  Thus these are the maximum and minimum values of~$\nu$ and the image is as
  claimed.
  As we can rotate to $x^{k} \geq 0$, $k=0,2,4$, we get $x^{k} = 1/2$ for these
  $k$, and $x^{7} = \pm1/2$.
  Thus the $T^{3}$-orbit has a unique representative, $T^{3}$~acts freely at
  these points and these two critical sets are copies of~$T^{3}$.

  For $\nu=0$, the equations~\eqref{eq:critical} give that in each of the
  following sets $\{x^{0},x^{2},x^{4},x^{6}\}$ and $\{x^{0},x^{2},x^{4},x^{7}\}$
  two variables are zero.
  Thus in $S^{7} \subset \mathbb{C}^{4}$, the critical sets are three-spheres
  that are the intersection of $S^{7}$ with complex planes in $\mathbb{C}^{4}$
  of the form $z^{i} = 0 = z^{j}$, $i \ne j$.
  Any two such three spheres meet in a circle $C_{i}$ given by
  $\lvert z^{i}\rvert =1$, $z^{k} = 0$, for $k\ne i$.
  The $T^{3}$-quotient of the critical points in $\nu^{-1}(0)$, thus form a
  graph with vertices corresponding to the circles~$C_{i}$, and edges
  corresponding to the three spheres.  Thus the graph has the form claimed.

  Let $q = (1/2,0,1/2,0,1/2,0,0,\epsilon/2)$, $\epsilon \in \{\pm1\}$ be our
  preferred point in~$\nu^{-1}(\epsilon/4)$.
  Then $q$ is a critical point for~$\nu$, and we may compute the Hessian $\Hess$
  of~$\nu$ at~$q$ as follows: for $i=0,\dots,7$, let
  $X_{i} = \partial_{i} - x^{i}E$, which is the projection of $\partial_{i}$ to
  the tangent bundle~$TS^{7}$ of~$S^{7}$.
  At a general point, the Hessian of~$\nu$ is given by
  $\Hess(X_{i},X_{j}) = X_{i}(X_{j}\nu) - (\nabla_{X_{i}}X_{j})\nu$, where
  $\nabla$ is the Levi-Civita connection on~$S^{7}$.
  At a critical point, such as~$q$, the last term is zero, since $Y_{q}\nu = 0$
  for all tangent vectors~$Y$.
  Thus $\Hess(X_{i},X_{j})_{q} = X_{i}(X_{j}\nu)$.
  A direct computation gives that the matrix of~$\Hess_{q}$ with respect to the
  spanning set
  $X_{0},X_{2},X_{4},\epsilon X_{7},X_{1},X_{3},-X_{5},\epsilon X_{6}$ is
  \begin{equation*}
    \Hess_{q} =
    \epsilon\begin{pmatrix}
      H_{1} & 0 \\
      0 & H_{2}
    \end{pmatrix}
    ,\qquad\text{with}\quad
    H_{1} = \frac{1}{2}(1_{4\times4} - 3 \Id_{4}),\quad
    H_{2} = - 1_{4\times4},
  \end{equation*}
  where $1_{n\times m}$ is the $(n\times m)$-matrix with all entries equal
  to~$1$, and $\Id_{n}$ is the $n\times n$ identity matrix.
  It follows that $\Hess_{q}$ has rank~$4$.
  As the dimension of the $T^{3}$-orbit through $q$ is of dimension~$3$, it
  follows that the rank of~$\Hess_{q}$ is maximal.
  Since $q$ is a maximum or minimum for $\nu$, the equivariant Morse lemma then
  implies that there are coordinates $(y^{1},\dots,y^{4},z^{1},z^{2},z^{3})$
  on~$S^{7}$ near~$q$, so that
  $\nu(y,z) = \epsilon\sum_{j=1}^{4} (y^{i})^{2} = \epsilon\lVert y\rVert^{2}$.
  Thus the $T^{3}$-orbit through~$q$ has a tubular neighbourhood of the form
  $T^{3} \times \bR^{4}$, where
  $\nu|_{\bR^{4}} = \epsilon\lVert\cdot\rVert^{2}$.
  Thus for some small~$\delta$, each $s$ with
  $\lvert s\rvert \in I_{\delta} = (1/4-\delta,1/4)$ has that $\nu^{-1}(s)$ is
  equivariantly diffeomorphic to
  $T^{3} \times S^{3} \subset T^{3} \times \bR^{4}$, and $\nu^{-1}(s)/T^{3}$ is
  diffeomorphic to~$S^{3}$.
  As there are no critical values for $\nu$ in $(-1/4,0) \cup (0,1/4)$, the
  diffeomorphism type of $\nu^{-1}(t)$ and $\nu^{-1}(t)/T^{3}$ for
  $\lvert t\rvert \in (0,1/4)$ is the same as that for
  $\lvert s \rvert \in I_{\delta}$, and the result follows.
\end{proof}

\begin{remark}
  That the sets given here as the maximum and minimum of $\nu$ are the only
  $T^{3}$-invariant associative submanifolds of round~$S^{7}$ was already noted
  in \cite{lotay} as a direct consequence of~\cite{harvey-lawson}.
  So Proposition~\ref{prop:critical} already gives that there are no other
  non-zero critical values.
\end{remark}

\begin{remark}
  In \cite{kawai}, Kawai studies the seven-sphere with its squashed metric and
  $\G_{2}$-structure, and in particular $T^{3}$-invariant associative
  submanifolds in \cite[Proposition~6.1]{kawai}.
  Using the computations of that paper, and adjusting the choice of linear
  coordinates, one finds that the multi-moment map is a non-zero constant
  multiple of the function~$\nu$ given in~\eqref{eq:nu-S7}.
  Thus, after scaling, the results of Proposition~\ref{prop:round-S7}, also
  apply to the squashed metric, and the determined associative submanifolds
  agree.
\end{remark}

\subsection{Special classes}
\label{sec:special-classes}

As described in the introduction, there are several classes of nearly parallel
$\G_{2}$-geometries on seven-dimensional manifolds $M$, characterised by the
dimension of the space of Killing spinors~\cite{bar,friedrich-kms}.
Let us give a few concrete details, to clarify the symmetry considerations.
A general geometric reference is \cite{boyer-galicki}.

\smallbreak \textbf{$3$-Sasaki manifolds.}
The eight-dimensional metric cone $C = \bR \times M$, $g_{C} = dr^{2} + r^{2}g$ is
hyperK\"ahler, with closed two-forms
$\omega_{I}^{C},\omega_{J}^{C},\omega_{K}^{C}$, related by
$\omega_{A}^{C} = g_{C}(A\cdot{},{}\cdot{})$, for $A=I,J,K$, and with
$A^{2}=-\Id$ and $IJ = K = -JI$.
Writing $\iota\colon M \to \{1\} \times M \subset C$ for the inclusion, and
$E = r\partial_{r}$ for the Euler vector field on~$C$, we get one-forms
$\eta_{A} = \iota^{*}(E\hook \omega_{A}^{C})$ and two-forms
$\omega_{A} = \iota^{*}(\omega_{A}^{C})$, satisfying $d\eta_{A} = 2\omega_{A}$, for
$A=I,J,K$.
The vector fields dual to~$\eta_{A}$ are the restrictions of~$AE$, and generate a
locally free action of~$\SU(2)$ acting as rotation on
$\mathrm{Span}\{\eta_{I},\eta_{J},\eta_{K}\}$ and $\mathrm{Span}\{\omega_{I},\omega_{J},\omega_{K}\}$.

Bielawski \cite{bielawski} showed that the symmetry group of the $3$-Sasaki
$(M,g,\eta_{I},\eta_{J},\eta_{K})$ has rank at most~$2$, when $\eta_{A}$ are all preserved
individually and that all examples are obtained as $3$-Sasaki quotients of
higher-dimensional spheres.
Boyer \& Galicki \cite{boyer-galicki} gave explicit constructions showing that
are infinitely many compact seven-dimensional examples, and the cohomology of
these was computed by Hepworth \cite{hepworth}.
Note that the $T^{2}$-action in these examples commutes with the above action
of~$\SU(2)$.

Fixing the choice $A=I$, there is a
nearly parallel $\G_{2}$-structure with
\begin{equation*}
  \varphi = -\eta_{I}\wedge\omega_{I} + \eta_{J}\wedge\omega_{J} + \eta_{K}\wedge\omega_{K},\quad
  {\star}\varphi = \frac{1}{4}d\varphi = \frac{1}{2}(-\omega_{I}^{2} + \omega_{J}^{2} + \omega_{K}^{2}).
\end{equation*}
When the $3$-Sasaki structure has $T^{2}$-symmetry, this $\G_{2}$-structures
gets $T^{3} = T^{2} \times S^{1}$-symmetry, where $S^{1} \subset \SU(2)$ is the subgroup
preserving~$I$.
The action of $\SU(2)$ on~$\varphi$, now gives a two-dimensional family of nearly
parallel $\G_{2}$-structures with the same metric.

On the other hand Friedrich et al.\ \cite{friedrich-kms} showed there is another
$\G_{2}$-structure.
Regarding $\mathcal{H} = \ker \eta_{I} \cap \ker \eta_{J} \cap \ker \eta_{K}$ as a horizontal
distribution transverse to the $\SU(2)$-orbits, one may make a canonical
variation.
Explicitly, putting $\beta_{A} = \omega_{A} - \eta_{BC}$, for $(ABC) = (IJK)$, the
three-form
\begin{equation*}
  \tilde{\varphi} = aa^{2} \eta_{IJK} + ab^{2} \bigl(
  \eta_{I} \wedge \beta_{I}
  + \eta_{J} \wedge \beta_{J}
  + \eta_{K} \wedge \beta_{K}
  \bigr)
\end{equation*}
is a $\G_{2}$-form with metric
$\tilde{g} = a^{2}(\eta_{I}^{2} + \eta_{J}^{2} + \eta_{K}^{2}) + b^{2}g_{\mathcal{H}}$.
One may check that this $\G_{2}$-structure is nearly parallel only for
$a = 3/5$, $b=3/\sqrt{5}$, see also \cite{kawai}, indeed this is the squashed
metric of the previous section.
This time $\tilde{\varphi}$ is preserved by the $\SU(2)$-action, so if the
$3$-Sasaki manifold has $T^{2}$-symmetry, then the canonical variation
$\tilde{\varphi}$ has $T^{2} \times \SU(2)$-symmetry, and is preserved by any maximal
subgroup $T^{3}$-symmetry.
The cone on $\tilde{g}$ has full holonomy~$\Spin(7)$.

\smallbreak \textbf{Sasaki-Einstein manifolds.} Here the eight-dimensional
metric cone is Calabi-Yau, so carries a K\"ahler form $\omega_{C}$, with complex
structure~$I$, and a complex volume form $\Omega_{C}$.
Note that $S^{1}$ generated by $IE$, acts on $\Omega_{C}$ as multiplication
by~$e^{it}$.
The K\"ahler induces a one-form $\eta = \iota^{*}(E\hook\omega_{C})$ on~$M$, with
$d\eta = 2\omega$, for $\omega = \iota^{*}(\omega_{C})$.
One may regard $\omega$ as a Hermitian form on $\mathcal{H} = \ker \eta$.
Putting $\Omega = \iota^{*}(E \hook \Omega_{C})$ gives a $(3,0)$-form
on~$\mathcal{H}$ that satisfies $d\Omega = 4i \eta \wedge \Omega$.
The geometry on~$M$ is Sasaki-Einstein, in particular the induced metric is
Einstein with positive scalar curvature.
A nearly parallel $\G_{2}$-structure compatible with this metric is given by
\begin{equation*}
  \varphi = \eta \wedge \omega + \operatorname{Re} \Omega.
\end{equation*}
But there are circle of these structures, obtained by replacing $\Omega$ by
$e^{it}\Omega$, cf.\ \cite{fernandez-fkm}.

One may construct examples, where K\"ahler structure of the cone has
$T^{4}$-symmetry, by starting with methods of toric geometric geometry
\cite{martelli-sy,cho-fo,futaki-ow}.
Such a $T^{4}$ action, necessarily acts non-trivially on $\Omega_{C}$, and the
stabiliser gives a $T^{3}$-action that preserves the Calabi-Yau structure and
the nearly parallel $\G_{2}$-structure.
This construction includes the $3$-Sasaki examples as a special case.

\section{Torus-reduction}
\label{sec:torus-reduction}

Let $s$ be a non-zero regular value for~$\nu$.
We now wish to study the geometry of the principal bundle
$\nu^{-1}(s) \to \nu^{-1}(s)/T^3$.
We first write the nearly parallel $\G_{2}$-structure on $M$ in terms of the
invariant forms $d\nu$, $\theta_{i}$, $\alpha_{j}$.
Then we understand the geometry of the orbit space $\nu^{-1}(s)/T^3$ imposed by
the nearly parallel condition.

By inverting the system of identities from via \eqref{eq:nu-pqr-dnu} and
\eqref{eq:local-theta-alpha}, we get the expressions of the vectors
$e^1,\dots,e^7$ in terms of $d\nu, \theta_i, \alpha_j$, for all $i,j$.
One can then write the nearly parallel $\G_{2}$-structure
\eqref{eq:np-str1}--\eqref{eq:np-str2}, the induced metric, and the volume form
in terms of the invariant one-forms.  The results are the following:
\begin{align}
  \label{eqlist:np-metric}
  g &= \frac{1}{16\rho}d\nu^{2} + \theta^{T}H\theta + \frac{1}{\rho}\alpha^{T}H\alpha, \\
  \label{eqlist:np-vol}
  \vol & = \frac{h}{4\rho^{2}} \vol_{\theta}\wedge \vol_{\alpha} \wedge d\nu, \\
  \label{eqlist:np-str}
  \varphi
    &= \nu\,\vol_{\theta}
      - (\Hd{\theta})^{T} \wedge \alpha
      - \frac{\nu}{\rho} \theta^{T} \wedge \Hd{\alpha}
      - \frac{1}{4\rho} d\nu \wedge \theta^{T} \wedge H\alpha
      + \frac{1}{\rho} \vol_{\alpha},\\
  \label{eqlist:np-str-star}
  \star \varphi
    &= \frac{1}{4} d\nu \wedge \vol_{\theta}
      - \frac{1}{4} \,(\Hd{\theta})^{T} \wedge \tau
      + \frac{\nu}{4\rho} d\nu \wedge \,(\Hd{\theta})^{T} \wedge \alpha
      \eqbreak\nonumber
      - \frac{1}{4\rho} d\nu \wedge \theta^{T} \wedge \Hd{\alpha}
      - \frac{\nu}{4\rho^{2}} d\nu \wedge \vol_{\alpha}
      ,
\end{align}
where $h = \det H$, $\rho = h - \nu^{2}$, $\vol_{\theta} = \theta_{123}$,
$\vol_{\alpha} = \alpha_{123}$, $(\Hd{\gamma})_{i} = \gamma_{jk}$, for
$(ijk)=(123)$, and
\begin{equation}
  \label{eq:tau}
  \tau = \frac{4}{\rho} {\star}_{H\alpha}(H\alpha),
\end{equation}
so
\begin{equation*}
  \tau_{i} = \frac{4}{\rho} (H\alpha)_{j} \wedge (H\alpha)_{k}.
\end{equation*}
Note that $\tau$ can also be written two other forms
\begin{equation*}
  \tau = \frac{4}{\rho} (\adj H)\,\Hd{\alpha} = \frac{4h}{\rho} H^{-1} \Hd{\alpha},
\end{equation*}
where $\adj H$ is the adjugate of~$H$, so the matrix satisfying
$H \adj H = h \Id_{3}$.  It will be convenient to write
\begin{equation}
  \label{eq:sigma-beta}
  \sigma = H\tau = \frac{4h}{\rho}\Hd{\alpha}\quad\text{and}\quad\beta = H\alpha.
\end{equation}
On the other hand, differentiating the expressions for $\alpha_i$ in
\eqref{eq:horizontal-forms}, we obtain
\begin{equation*}
  d\alpha_i = d\nu \wedge \theta_i + \nu\, d\theta_i - 4\,{\star \varphi}(U_j,U_k,{}\cdot{},{}\cdot{}), \quad i=1,2,3,
\end{equation*}
and thus
\begin{equation}
  \label{eq:curvature}
  \nu\, d\theta = d\alpha - \tau + \frac{\nu}{\rho} d\nu \wedge \alpha.
\end{equation}
We also note
\begin{equation*}
  d\rho = dh - 2\nu\,d\nu.
\end{equation*}

We now wish to compare terms in $d\varphi=4\,{\star \varphi}$ and write an
equivalent formulation of this identity in terms of the invariant one-forms
$d\nu$, $\theta_i$, $\alpha_j$.
First, one can compute $d\varphi$ explicitly from \eqref{eqlist:np-str}:
\begin{equation*}
  \begin{split}
    d\varphi
    &= d\nu \wedge \vol_{\theta}
      + (\Hd{\theta})^{T} \wedge (\nu d\theta - d\alpha) \eqbreak
      + \theta^{T} \wedge (d\theta \barwedge \alpha  - \alpha \barwedge d\theta)
      + \theta^{T} \wedge d\biggl(\frac{\nu}{4h}\sigma\biggr)
      + \theta^{T} \wedge d\nu \wedge d\biggl(\frac{1}{4\rho}\beta\biggr)
      \eqbreak
      - \frac{\nu}{4h} d\theta^{T} \wedge \sigma
      + \frac{1}{4\rho} d\nu \wedge (d\theta)^{T} \wedge \beta
      - \frac{1}{\rho^{2}} dh \wedge \vol_{\alpha} + \frac{2\nu}{\rho^{2}} d\nu \wedge \vol_{\alpha}
      + \frac{1}{\rho} d\vol_{\alpha},
  \end{split}
\end{equation*}
where $(\gamma \barwedge \delta)_{i} = \gamma_{j} \wedge \delta_{k}$, for
$(ijk)=(123)$.
Using~\eqref{eq:curvature}, we see that the first two terms of $d\varphi$ agree
with the first three terms of $4\,{\star \varphi}$, so that
$d\varphi - 4\,{\star \varphi}$ only includes terms of degrees $0$ and $1$
in~$\theta$.  The terms of degree~$0$ give
\begin{equation}
  \label{eq:deg0}
  \begin{split}
    \frac{4}{\rho} dh \wedge \vol_{\alpha}
    &= d\nu \wedge (d\theta)^{T} \wedge \beta
      = \frac{1}{\nu} d\nu \wedge (d\alpha - \tau)^{T} \wedge \beta \\
    &= \frac{1}{\nu} d\nu \wedge d\alpha^{T} \wedge \beta - \frac{12h}{\nu\rho} d\nu \wedge \vol_{\alpha},
  \end{split}
\end{equation}
and those of degree $1$ yield
\begin{equation*}
  \begin{split}
    0
    &= d\theta \barwedge \alpha - \alpha \barwedge d\theta
      + d\biggl(\frac{\nu}{4h}\sigma\biggr)
      + d\nu \wedge d\biggl(\frac{1}{4\rho}\beta\biggr)
      - \frac{1}{4h} d\nu \wedge \sigma
    \\
    &= \frac{1}{\nu} d\,\Hd{\alpha}
      + \frac{1}{\nu}(\alpha \barwedge \tau - \tau \barwedge \alpha)
      + \frac{2}{\rho} d\nu \wedge \Hd{\alpha}
      + \frac{\nu}{4} d\biggl(\frac{1}{h}\sigma\biggr)
      + d\nu \wedge d\biggl(\frac{1}{4\rho}\beta\biggr)
    \\
    &= \frac{1}{4\nu} d\biggl(\frac{\rho}{h}\sigma\biggr)
      + \frac{1}{2h} d\nu \wedge \sigma
      + \frac{\nu}{4} d\biggl(\frac{1}{h}\sigma\biggr)
      + d\nu \wedge d\biggl(\frac{1}{4\rho}\beta\biggr)
    \\
    &= \frac{1}{4\nu}d\sigma
      + d\nu \wedge d\biggl(\frac{1}{4\rho}\beta\biggr)
      ,
  \end{split}
\end{equation*}
as $\alpha \barwedge \tau = \tau \barwedge \alpha$, by the symmetry of $H^{-1}$.
Thus
\begin{equation}
  \label{eq:d-sigma}
  d\sigma + \nu\,d\nu \wedge d\biggl(\frac{1}{\rho}\beta \biggr) = 0.
\end{equation}

Let $s$ be a regular value for~$\nu$.
We are now ready to study the geometry of the torus-bundle
$\nu^{-1}(s) \to \nu^{-1}(s)/T^3$.
The vertical space at each point is generated by $U_1$, $U_2$, $U_3$, so the
triple $\theta = (\theta_1,\theta_2,\theta_3)^{T}$ is a connection one-form of
the bundle.
One checks that $\mathcal{L}_{U_{\ell}}\alpha_i=0$ and $\alpha_i(U_{\ell})=0$
for all $\ell=1,2,3$.
So the one-forms $\alpha_i$, $i=1,2,3$, are basic and descend to a global
coframe on $\nu^{-1}(s)/T^3$.
Write $d_{3}$ for the exterior derivative on the base.
Note that $H$ is also $T^{3}$-invariant, so $H$, $\beta$, $\sigma$ and $\tau$
also descend to the base.
Pulling back \eqref{eq:d-sigma} to~$\nu^{-1}(s)$, gives
\begin{equation*}
  d_{3}\sigma = 0.
\end{equation*}
On the other hand, \eqref{eq:deg0} gives no relation on~$\nu^{-1}(s)/T^{3}$.

Restricting the identities in \eqref{eq:curvature} to $\nu^{-1}(s)$, and writing
$d_{6}$ for the exterior derivative on this submanifold, we get the curvature
two-form $d_{6}\theta$ on the three-manifold~$\nu^{-1}(s)/T^{3}$.
For $s \neq 0$, we have, omitting pull-backs,
\begin{equation}
  \label{eq:curvature-6}
  d_{6}\theta = \frac{1}{s}(d_{3}\alpha-\tau),
\end{equation}
so
\begin{equation*}
  d_{3}\tau = 0,
\end{equation*}
and the cohomology class $[d_{6}\theta]$ is that of $-\tau/s$.

We can thus formulate the following result.

\begin{proposition}
  \label{prop:tau-base}
  Let $s \neq 0$ be a regular value for $\nu$.
  On the three-manifold $\nu^{-1}(s)/T^{3}$, the two-form
  $\tau = (\tau_{1},\tau_{2},\tau_{3})^{T}$ of~\eqref{eq:tau} is closed with
  components linearly independent at each point.
  The cohomology class $[\tau/s]$ is integral and the two-form $\sigma=H\tau$
  of~\eqref{eq:sigma-beta} is also closed.
  As a principal $T^{3}$-bundle over the base, $\nu^{-1}(s)$ has curvature given
  by~\eqref{eq:curvature-6}.  \qed
\end{proposition}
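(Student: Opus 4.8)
The plan is to assemble the statement from the computations already carried out in Section~\ref{sec:torus-reduction}, checking at each point that the relevant forms are $T^{3}$-invariant and horizontal, hence genuinely live on $Q = \nu^{-1}(s)/T^{3}$. First I would note that $H$, $\alpha$, $\beta = H\alpha$, $\tau$ of~\eqref{eq:tau} and $\sigma = H\tau$ are all built algebraically from $g(U_{i},U_{j})$ and contractions of $\varphi$ with the $U_{i}$, so their $T^{3}$-invariance follows from $\mathcal{L}_{U_{\ell}}\varphi = 0$ and $[U_{i},U_{j}]=0$; and one checks $\alpha_{i}(U_{\ell}) = 0$ directly from~\eqref{eq:horizontal-forms}, so the $\alpha_{i}$, and consequently $\beta$, $\tau$, $\sigma$, are basic. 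Thus all these objects descend to $Q$. From~\eqref{eq:local-theta-alpha} the matrix $X$ has rank~$3$ away from the critical points, so on $\nu^{-1}(s)$ with $s$ regular the three one-forms $\alpha_{1},\alpha_{2},\alpha_{3}$ are pointwise linearly independent and give a global coframe on~$Q$; since $\tau_{i} = \frac{4}{\rho}\beta_{j}\wedge\beta_{k}$ with $\beta = H\alpha$ and $H$ invertible (as $h = \det H = \nu^{2} + \rho \ne 0$ on the level set), the $\tau_{i}$ are, up to the nonvanishing scalar $4/\rho$ and the determinant factors of $H$, exactly the products of pairs from a coframe, hence pointwise linearly independent.

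Next I would read off closedness. Equation~\eqref{eq:d-sigma}, derived from the degree-one part of $d\varphi - 4\,{\star\varphi} = 0$, pulls back to $\nu^{-1}(s)$; there $d\nu$ restricts to zero, so the pullback reads $d_{6}\sigma = 0$, and since $\sigma$ is basic this descends to $d_{3}\sigma = 0$ on~$Q$. For $\tau$, restrict~\eqref{eq:curvature} to $\nu^{-1}(s)$: with $\nu \equiv s$ and $d\nu = 0$ on the level set it becomes $s\,d_{6}\theta = d_{6}\alpha - \tau$, i.e.\ \eqref{eq:curvature-6}, $d_{6}\theta = \frac{1}{s}(d_{3}\alpha - \tau)$; taking $d_{6}$ of both sides and using $d_{6}^{2}\theta = 0$ together with $d_{6}d_{6}\alpha = 0$ gives $d_{3}\tau = 0$. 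Finally the curvature of the principal $T^{3}$-bundle $\nu^{-1}(s) \to Q$ is by definition represented by $d_{6}\theta$ (since $\theta$ is a connection one-form, the vertical space being spanned by the $U_{i}$), which is~\eqref{eq:curvature-6}; integrality of $[\tau/s] = -[d_{6}\theta]$ is then the standard Chern--Weil statement that the curvature of a principal torus bundle represents an integral class in $H^{2}(Q,\bZ^{3})$.

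The one genuine point to be careful about — the step I expect to be the main obstacle rather than bookkeeping — is verifying that $\theta = (\theta_{1},\theta_{2},\theta_{3})^{T}$ really is a bona fide connection one-form for the $T^{3}$-action, i.e.\ that it is $T^{3}$-equivariant (here just invariant, the torus being abelian) and restricts to the identity (Maurer--Cartan form) on the vertical distribution. Invariance again follows from $[U_{i},U_{j}]=0$ and invariance of $g$; the normalisation $\theta_{i}(U_{j}) = \delta_{ij}$ is not quite immediate from $\theta_{i}=U_{i}^{\flat}$ — rather $\theta_{i}(U_{j}) = g(U_{i},U_{j}) = H_{ij}$ — so strictly one should work with $H^{-1}\theta$, or equivalently note that any $\mathfrak{t}^{3}$-valued one-form that is invariant and nondegenerate on the vertical space differs from a connection by a basic form and hence has the same curvature class; since the $\alpha_{i}$ are basic, the class $[d_{6}\theta]$ computed from $\theta$ via~\eqref{eq:curvature-6} is the correct Chern class. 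With that understood, the four assertions of the proposition — closedness of $\tau$, pointwise linear independence of its components, integrality of $[\tau/s]$, closedness of $\sigma$, and the curvature formula~\eqref{eq:curvature-6} — are exactly the displayed conclusions accumulated above, and the proof is complete.
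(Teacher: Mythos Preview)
Your proof is correct and follows exactly the argument the paper accumulates in the paragraphs leading up to the proposition (which is why it carries a bare \qed). Your one worry is a misreading: in the paper $\theta_{i}$ is defined as the \emph{dual} one-form to~$U_{i}$, meaning $\theta_{i}(U_{j}) = \delta_{ij}$ by definition, with $H\theta = U^{\flat}$ a derived identity rather than the definition --- so the connection one-form you propose, namely $H^{-1}U^{\flat}$, is precisely the paper's~$\theta$, and no correction is needed.
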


\section{Inverse construction}
\label{sec:inverse-construction}

Let $Q$ be a three-manifold equipped with a coframe
$\alpha = (\alpha_{1},\alpha_{2},\alpha_{3})^{T}$ and a symmetric positive-definite matrix
$H = (H_{ij})_{i,j=1,2,3}$ of functions $H_{ij} \in C^{\infty}(Q)$.
In particular, $h = \det H>0$.
Assume that there is an $s \in \bR$ with $0 < s^{2} < h$; this is always the case
if $Q$ is compact, and write $\rho = h - s^{2}$.
We define $\tau$ by~\eqref{eq:tau} and assume that $\tau$ and $\sigma=H\tau$ are closed, and
that $[\tau/s]$ is integral in $H^{2}(Q,\bR^{3})$.
Here $\bR^{3}$ is the Lie algebra of $T^{3}$, and integrality is that elements
with values in the integral lattice in this Lie algebra, have integral periods.
We put $\beta = H\alpha$.

Let $\pi \colon E \to Q$ be a principal $T^{3}$-bundle with connection-one
form~$\theta$ satisfying~\eqref{eq:curvature-6}.
This bundle exists, since $[d\theta] = [\tau/s]$, which was assumed to be
integral.
We regard the total space $E$ as the level set $\nu^{-1}(s)$ of the previous
section.
Consider $M$ an open neighbourhood of $\{s\} \times E \subset \bR \times E$ that
is $\bR$-connected, meaning that $M \cap (\bR \times \{p\})$ is an open interval
for each~$p \in E$.
We construct a $\G_{2}$-structure on $M$ via the tensors
\eqref{eqlist:np-metric}--\eqref{eqlist:np-str-star}, with $\alpha,H,\theta$
being dependent on~$s$ and with $\nu$ replaced by $s$ throughout.
The nearly parallel condition is now equivalent to \eqref{eq:deg0} and
\eqref{eq:d-sigma} on $M$, again with $\nu$ replaced by~$s$.

\begin{remark}
  The $\G_{2}$-structure is determined solely by the
  three-form~\eqref{eqlist:np-str}.
  The set of $\G_{2}$ three-forms is open in $\Omega^{3}(M)$, so any small
  deformation of $\alpha$, $H$ and $\theta$ yields a new three-form, even if the
  deformation of $H$ is not symmetric.

  The metric and volume form are determined from the three-form
  by~\eqref{eq:g-vol}.
  If $H$ is not symmetric, then $d\nu$ is no longer orthogonal to~$\theta$, and
  thus $g$, $\vol$, and hence $\star\varphi$, are not given by the expressions
  \eqref{eqlist:np-metric}, \eqref{eqlist:np-vol} and
  \eqref{eqlist:np-str-star}.

  Thus the above statement that the nearly parallel condition is equivalent to
  \eqref{eq:deg0} and \eqref{eq:d-sigma} relies on $H$ being symmetric even when
  it varies with~$s$.
\end{remark}

The differential $d$ on~$M$ splits as the sum of the differential on $E$ and the
one in the remaining direction: we write $d=d_6+d_s$, where $d_6$ is the
differential on $E$ and $d_s\gamma = ds \wedge \gamma'$, with $\gamma'$ the
derivative of $\gamma$ with respect to~$s$, so
$\gamma' = \partial_{s} \hook d\gamma$.
Note that with this choice, we have
$(\gamma \wedge \delta)' = \gamma' \wedge \delta + \gamma \wedge \delta'$ and
$d_{6}(\gamma') = (d_{6}\gamma)'$, for any $s$-dependent differential forms
$\gamma$ and~$\delta$.
Also note that $\partial_{s} = (1/16\rho) d\nu^{\sharp}$ is not of constant
length in~$M$.

In this notation, \eqref{eq:curvature} gives
\begin{equation}
  \label{eq:theta-p}
  \theta' = \frac{1}{s}\alpha' + \frac{1}{\rho}\alpha,
\end{equation}
thus determining $\theta'$ from the flow of $\alpha$ and $H$.
Equation~\eqref{eq:deg0} implies
\begin{equation}
  \label{eq:h-p}
  h'\vol_{\alpha} = \frac{\rho}{4s} \beta^{T} \wedge d_{3}\alpha - \frac{3h}{s} \vol_{\alpha},
\end{equation}
and \eqref{eq:d-sigma} gives
\begin{equation}
  \label{eq:sigma-p}
  \sigma' = - s d_{3}\biggl(\frac{1}{\rho}\beta\biggr).
\end{equation}

Differentiating \eqref{eq:curvature} on~$M$, and substituting for $d\theta$, we
have
\begin{equation}
  \label{eq:tau-p}
  \tau' = \frac{1}{s} \tau - \frac{1}{s} d_{3}\biggl(\frac{h}{\rho}\alpha\biggr).
\end{equation}
Substituting into \eqref{eq:sigma-p}, using
$\sigma' = (H\tau)' = H'\tau + H\tau'$, gives
\begin{equation}
  \label{eq:H-p-tau}
  H'\tau = - \frac{1}{s} \sigma + \frac{1}{s} Hd_{3}\alpha - \frac{s}{\rho} d_{3}H \wedge \alpha.
\end{equation}

\begin{theorem}
  \label{thm:flow-construction}
  Let $(Q,h_{Q})$ be a smooth oriented Riemannian $3$-manifold with a global
  coframe $\alpha_{1},\alpha_{2},\alpha_{3}$.
  Write $h_{Q} = \alpha^{T}H\alpha$ with $H=(H_{ij})$, where
  $\alpha=(\alpha_{1},\alpha_{2},\alpha_{3})^{T}$.
  Assume there is an $s_{0} \in \bR$ with $0 < s_{0}^{2} < h = \det H$.
  For each cyclic permutation $(ijk)$ of $(123)$, put
  \begin{equation*}
    \sigma_{i} = \frac{4h}{\rho} \alpha_{j} \wedge \alpha_{k},\qquad
    \tau_{i} = \frac{4}{\rho} \beta_{j} \wedge \beta_{k},
  \end{equation*}
  where $\rho = h - s_{0}^{2}$ and
  $\beta_{p} = (H\alpha)_{p} = \sum_{a=1}^{3} H_{pa}\alpha_{a}$.

  Suppose for all $k=1,2,3$, that $\sigma_{k}$ and $\tau_{k}$ are closed and the
  cohomology class $[\tau/s_{0}] \in H^{2}(Q,\bR^{3})$,
  $\tau=(\tau_{1},\tau_{2},\tau_{3})^{T}$ is integral.
  Let $E \to Q$ be a principal $T^{3}$-bundle $E \to Q$ with connection one-form
  $(\theta_{1},\theta_{2},\theta_{3})$ such that $d_{6}\theta_{k} = (d\alpha_{k} - \tau_{k})/s_{0}$.

  Then there exists an open $\bR$-connected neighbourhood~$M$ of
  $\{s_{0}\} \times E$ in $\bR \times E$ on which there is a unique solution to
  the flow equations \eqref{eq:theta-p}, \eqref{eq:h-p}, \eqref{eq:sigma-p},
  \eqref{eq:tau-p}, \eqref{eq:H-p-tau} with the given initial data at
  $s = s_{0}$.
  Via \eqref{eqlist:np-metric}--\eqref{eqlist:np-str-star}, with $\nu$ the
  $\bR$-parameter, this solution defines the unique nearly parallel
  $\G_{2}$-structure on $M$ that is $T^{3}$-invariant and descends to the given
  data $(\alpha,h_{Q})$ on~$Q$ at $s=s_{0}$.
\end{theorem}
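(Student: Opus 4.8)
The strategy is to view the five "flow equations" not as independent ODEs but as a single determined evolution for the pair $(\alpha,H)$ along the $s$-direction, and then to feed the solution into the algebraic formulas \eqref{eqlist:np-metric}--\eqref{eqlist:np-str-star} to produce the $\G_{2}$-structure. First I would observe that $\theta'$, $\tau'$, $\sigma'$ and $h'$ are all \emph{determined} by $\alpha$, $H$ and their $s$-derivatives and spatial derivatives: \eqref{eq:theta-p} expresses $\theta'$ in terms of $\alpha'$ and $\alpha$; $\tau'$ and $\sigma'$ are forced by the defining formulas for $\tau,\sigma$ once $H'$ is known; so the only genuinely new content is \eqref{eq:H-p-tau} together with \eqref{eq:h-p}. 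The key point is that since $\alpha_{1},\alpha_{2},\alpha_{3}$ is a coframe on the $3$-manifold, $\tau=(\tau_{1},\tau_{2},\tau_{3})^{T}$ is a pointwise basis of $\Lambda^{2}T^{*}Q$ (its components are linearly independent, being wedge products of the independent $\beta_{j}$), so the equation $H'\tau = (\text{given }2\text{-form-valued matrix})$ \emph{uniquely} determines the $3\times3$ matrix $H'$ pointwise. Hence \eqref{eq:H-p-tau} is an honest first-order evolution equation $H' = F(H,\alpha,d_{3}H,d_{3}\alpha,s)$, and coupling it with the prescription of $\alpha'$ — which I would extract from \eqref{eq:theta-p} once we fix that $\theta$ is the connection form, i.e.\ use $d_6\theta$ as the curvature and read off $\alpha' = s\theta' - (s/\rho)\alpha$ — gives a closed system. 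Standard ODE theory (Picard--Lindelöf applied fibrewise, with smooth dependence on the base point and on $s$) then yields a unique solution on some $\bR$-connected neighbourhood $M$ of $\{s_0\}\times E$.

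Next I would verify that this solution actually feeds consistently into the construction: one must check that the $H(s)$ produced stays symmetric and positive-definite for $s$ near $s_0$. Positive-definiteness is open, so it persists on a (possibly smaller) neighbourhood; symmetry is the delicate point flagged in the paper's own introduction. The plan here is to split $H' $ into its symmetric and antisymmetric parts and show the antisymmetric part vanishes: since at $s=s_0$ we have $H=H^{T}$, it suffices to show that the right-hand side of \eqref{eq:H-p-tau}, when contracted appropriately against $\tau$, produces a symmetric matrix — equivalently that $\alpha \barwedge \tau = \tau \barwedge \alpha$ and $H d_3\alpha$ against $\tau$ symmetrises correctly, which is exactly the identity $\alpha\barwedge\tau = \tau\barwedge\alpha$ used (via symmetry of $H^{-1}$) in deriving \eqref{eq:d-sigma}. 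One then runs a Gronwall/uniqueness argument: the antisymmetric part of $H$ satisfies a linear homogeneous ODE with zero initial data, hence stays zero. I would package this as: the system \eqref{eq:H-p-tau} preserves the affine subspace of symmetric matrices, so starting symmetric it remains symmetric.

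Having a symmetric positive-definite $H(s)$, a coframe $\alpha(s)$ on $E$ (pulled back from $Q$ at $s_0$ and evolved), and the connection $\theta(s)$ with $d\theta = (d_3\alpha - \tau)/s$ on $E$, I would then plug these into \eqref{eqlist:np-str} to get a three-form $\varphi$ on $M$ (with the $\bR$-coordinate playing the role of $\nu$). Because the model formulas \eqref{eq:np-str1}--\eqref{eq:np-str2} were obtained precisely by rewriting the standard $\G_{2}$-form in the $(d\nu,\theta,\alpha)$-coframe, and that coframe is a genuine coframe wherever $X$ has rank $3$ (guaranteed since $s$ is a regular value and $\rho>0$), $\varphi$ is pointwise $\G_{2}$; the induced metric and volume are then automatically \eqref{eqlist:np-metric} and \eqref{eqlist:np-vol}, using symmetry of $H$ (as the Remark stresses). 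Finally, the nearly parallel condition $d\varphi = 4\star\varphi$: by the computation already carried out in Section~\ref{sec:torus-reduction}, $d\varphi - 4\star\varphi$ has only degree-$0$ and degree-$1$ parts in $\theta$, and these vanish iff \eqref{eq:deg0} and \eqref{eq:d-sigma} hold — but \eqref{eq:deg0} is \eqref{eq:h-p}, \eqref{eq:d-sigma} unwinds to \eqref{eq:sigma-p}, and \eqref{eq:tau-p}, \eqref{eq:H-p-tau} are their consequences under the defining relations, all of which our solution satisfies by construction. Uniqueness of the $\G_{2}$-structure follows from uniqueness of the ODE solution together with the fact that a $T^{3}$-invariant nearly parallel structure descending to $(\alpha,h_Q)$ must, by Section~\ref{sec:torus-reduction}, have its invariant data obey exactly these equations.

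\textbf{Main obstacle.} The hardest step is the preservation of symmetry of $H$ under the flow \eqref{eq:H-p-tau}: one must show the antisymmetric part of the right-hand side, expressed through $\tau$, vanishes identically, using the algebraic identities among $\tau$, $\sigma=H\tau$, $\beta=H\alpha$ and $d_3H$. This is the point the introduction explicitly calls a "technical difficulty," and it is where the argument genuinely uses that $H$ entered as a metric (symmetric) rather than as an arbitrary matrix.
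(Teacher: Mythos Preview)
Your overall architecture---reduce to a first-order system for $(\alpha,H)$, apply Picard--Lindel\"of, verify $H$ stays symmetric, then feed into \eqref{eqlist:np-str}---is exactly the paper's. But two of your reductions do not go through as written.

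First, your extraction of $\alpha'$ is circular. Equation \eqref{eq:theta-p} relates $\theta'$ and $\alpha'$ but determines neither, since $\theta$ is itself part of the evolving data; and the curvature $d_{6}\theta$ lives in the $E$-directions, not the $s$-direction, so it says nothing about $\theta'$. The paper instead obtains $\alpha'$ from \eqref{eq:sigma-p}: since $\sigma = (4h/\rho)\,\Hd{\alpha}$, the prescribed $\sigma'$ determines $(\Hd{\alpha})' = W\,\Hd{\alpha}$, and one inverts the linear map $A \mapsto -A^{T} + (\Tr A)\Id_{3}$ to recover $A$ with $\alpha' = A\alpha$. Only then does \eqref{eq:theta-p} yield $\theta'$. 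So \eqref{eq:sigma-p} is not a redundant consequence of the defining formulas---it is the equation that closes the system in $\alpha$.

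Second, your symmetry argument for $H'$ is incomplete. The algebraic identity $\alpha \barwedge \tau = \tau \barwedge \alpha$ is not enough: writing $d_{3}\alpha = B\,\Hd{\alpha}$, the term $(1/s)Hd_{3}\alpha$ in \eqref{eq:H-p-tau} contributes $(\rho/4hs)HBH$ to $H'$, and $B$ has no a~priori symmetry. The paper establishes $(H')^{\natural} = 0$ by invoking the \emph{differential} hypotheses $d_{3}\sigma = 0$ and $d_{3}\tau = 0$: the first gives $B^{\natural} = -(s^{2}/h\rho)S$ with $d_{3}h = S^{T}\alpha$, the second gives $(RH+HBH)^{\natural} = -(1/\rho)(\adj H)S$, and only the combination of these two forces the antisymmetric part of $H'$ to cancel. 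Your Gronwall framing is fine, but the substantive claim---that the source term for $H-H^{T}$ vanishes---rests precisely on the closedness of $\sigma$ and $\tau$, which your proposal never invokes.
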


\begin{proof}
  From equations \eqref{eq:sigma-p} and~\eqref{eq:H-p-tau}, we can deduce a
  system of first order differential equations for the evolution of~$\alpha$
  and~$H$.
  Solutions to this smaller system will determine solutions to the flow
  equations.  To make this more explicit, write
  \begin{equation*}
    \alpha' = A\alpha,\quad d_{3}\alpha = B\,\Hd{\alpha},\quad
    d_{3}H_{ia} = \sum_{p=1}^{3} K_{ia}^{p}\alpha_{p},\quad d_{3}H \wedge \alpha =  R\,\Hd{\alpha},
  \end{equation*}
  for some $A, B, K^{p}, R \in C^{\infty}(Q,M_{3}(\bR))$.
  Note that $K_{ia}^{p} = K_{ai}^{p}$, by the symmetry of~$H$, and
  $R_{ia} = K_{ic}^{b} - K_{ib}^{c}$, for $(abc) = (123)$.

  Consider the equation \eqref{eq:H-p-tau}.  We have
  \begin{equation*}
    \frac{4h}{\rho}H' H^{-1} \Hd{\alpha}
    = \biggl(-\frac{4h}{s\rho}\Id_{3} + \frac{1}{s}HB - \frac{s}{\rho}R\biggr)\Hd{\alpha}.
  \end{equation*}
  Thus
  \begin{equation}
    \label{eq:H-p}
    H' = -\frac{1}{s}H + \frac{\rho}{4hs}HBH - \frac{s}{4h}RH
  \end{equation}
  is determined algebraically by the data~$(s,\alpha,H,d_{3}\alpha)$ on~$Q$.

  Before considering $\alpha'$, note that
  \begin{equation}
    \label{eq:h-H}
    3h = 3\det H
    = \sum_{(ijk)=(123)} \sum_{(abc)=(123)} H_{ia}H_{jb}H_{kc} - H_{ia}H_{jc}H_{kb},
  \end{equation}
  so $h' = \Tr(H'\adj H)$, where $(\adj H)_{ai} = H_{jb}H_{kc}-H_{jc}H_{kb}$,
  when $(ijk) = (123) = (abc)$.  Since $H \adj H = h \Id_{3}$, we conclude that
  \begin{equation}
    \label{eq:h-p-H-p-2}
    h' = -\frac{3h}{s} + \frac{\rho}{4s}\Tr(HB),
  \end{equation}
  because $\Tr R = 0$, by
  $\Tr R = \sum_{(ijk)=(123)} K_{ik}^{j} - K_{ij}^{k} = \sum_{(ijk)=(123)}
  K_{ki}^{j} - K_{ij}^{k} = 0$.

  Turning to $\alpha'$, first note that
  \begin{equation*}
    (\Hd{\alpha})' = (\alpha \barwedge \alpha)' = (A\alpha) \barwedge \alpha + \alpha \barwedge (A\alpha) = W\,\Hd{\alpha},
  \end{equation*}
  with $W = - A^{T} + (\Tr A) \Id_{3}$.  So
  \begin{equation}
    \label{eq:A-W}
    A = -W^{T} + \frac{1}{2}(\Tr W)\Id_{3}.
  \end{equation}
  But $\Hd{\alpha} = (\rho/4h)\sigma$, so equation~\eqref{eq:sigma-p} gives
  \begin{equation*}
    \begin{split}
      (\Hd{\alpha})'
      &= \biggl(\frac{\rho}{4h}\biggr)' \sigma
        - \frac{s\rho}{4h} d_{3}\biggl(\frac{1}{\rho} \beta\biggr) \\
      &= \frac{s}{h\rho}(sh' - 2h) \,\Hd{\alpha}
        + \frac{s}{4h\rho} d_{3}h \wedge H\alpha
        - \frac{s}{4h} d_{3}(H\alpha).
    \end{split}
  \end{equation*}
  From~\eqref{eq:h-H}, we have
  \begin{equation*}
    d_{3}h = \Tr((d_{3}H)\adj H) = \sum_{i,a,p=1}^{3}K_{ia}^{p} (\adj H)_{ia} \alpha_{p}
    \eqqcolon S^{T}\alpha.
  \end{equation*}
  Thus $d_{3}h \wedge H\alpha = F\,\Hd{\alpha}$ with
  \begin{equation}
    \label{eq:F-ip}
    F_{ip} = \sum_{n,a=1}^{3} (\adj H)_{na}\bigl(H_{ir}K_{na}^{q} - H_{iq}K_{na}^{r}\bigr)
    = H_{ir}S_{q} - H_{iq}S_{r},
  \end{equation}
  for $(pqr) = (123)$.  Note that
  \begin{equation*}
    \Tr F = \sum_{(ijk)=(123)} H_{ik}S_{j} - H_{ij}S_{k} = \sum_{(ijk)=(123)}
    H_{ki}S_{j} - H_{ij}S_{k} = 0.
  \end{equation*}
  We now have
  \begin{equation*}
    W = -\frac{5s}{\rho} \Id_{3} + \frac{s}{4h} \Tr(HB) \Id_{3}
    + \frac{s}{4h\rho}F
    - \frac{s}{4h}(R+HB)
  \end{equation*}
  with
  \begin{equation*}
    \Tr W = -\frac{15s}{\rho} + \frac{s}{2h}\Tr(HB).
  \end{equation*}
  Hence, from \eqref{eq:A-W}, we get
  \begin{equation}
    \label{eq:A}
    A = - \frac{5s}{2\rho} \Id_{3} - \frac{s}{4h\rho} F^{T}
    + \frac{s}{4h}(R^{T} + B^{T}H).
  \end{equation}
  Thus \eqref{eq:H-p}, $\alpha' = A\alpha$, with $A$ given by~\eqref{eq:A}, and
  $d_{3}$-derivatives of these equations, gives a first order system of
  differential equations for $H$, $\alpha$, $d_{3}\alpha$ and $d_{3}H$.
  The right-hand side of the system is smooth in this data away from $s=0$,
  $h=0$ and $\rho=0$, so is locally Lipschitz.

  We need to check consistency of the system.
  Note that \eqref{eq:tau-p} is a consequence of \eqref{eq:sigma-p}
  and~\eqref{eq:H-p-tau}.
  Also \eqref{eq:h-p} follows directly from~\eqref{eq:h-p-H-p-2}.
  Furthermore, \eqref{eq:sigma-p} and \eqref{eq:tau-p} show that the conditions
  $d_{3}\sigma = 0 = d_{3}\tau$ are preserved by the flow.

  It remains to see that $H'$ is symmetric.
  For $X \in C^{\infty}(Q,M_{3}(\bR))$, write
  $X^{\natural} \in C^{\infty}(Q,\bR^{3})$ for the vector with
  \begin{equation*}
    (X^{\natural})_{i} = X_{jk} - X_{kj} = (X-X^{T})_{jk}
  \end{equation*}
  where $(ijk) = (123)$.

  Note that for $\gamma \in \Omega^{1}(Q,\bR^{3})$ and
  $X,Y \in C^{\infty}(Q,M_{3}(\bR))$, we have
  \begin{equation*}
    \bigl((X\gamma) \barwedge (Y\Hd{\gamma})\bigr)_{i}
    = \sum_{a=1}^{3} X_{ja}\gamma_{a} \wedge \sum_{\mathclap{(pqr)=(123)}}Y_{kp}\gamma_{qr}
    = \sum_{a=1}^{3} X_{ja}Y_{ka}\vol_{\gamma},
  \end{equation*}
  when $(ijk) = (123)$.  Thus
  \begin{equation*}
    (Y\Hd{\gamma}) \barwedge (X\gamma) - (X\gamma) \barwedge (Y\Hd{\gamma}) = -(YX^{T})^{\natural} \,\vol_{\gamma}.
  \end{equation*}

  From $d_{3}\sigma = 0$, we have
  \begin{equation*}
    \begin{split}
      0
      &= \frac{\rho}{h}d_{3}\biggl(\frac{h}{\rho}\Hd{\alpha}\biggr)
        = d_{3}\alpha \barwedge \alpha - \alpha \barwedge d_{3}\alpha
        - \frac{s^{2}}{h\rho} S^{T}\alpha \wedge \Hd{\alpha} \\
      &= (B\Hd{\alpha}) \barwedge \alpha - \alpha \barwedge (B\Hd{\alpha}) - \frac{s^{2}}{h\rho} S \,\vol_{\alpha}.
    \end{split}
  \end{equation*}
  This gives
  \begin{equation}
    \label{eq:B-n}
    B^{\natural} = -\frac{s^{2}}{h\rho}S.
  \end{equation}
  Similarly $d_{3}\tau = 0$ gives
  \begin{equation*}
    \begin{split}
      0
      &= \rho\, d_{3}\biggl(\frac{1}{\rho}\Hd{\beta}\biggr)
        = (d_{3}\beta \barwedge \beta - \beta \barwedge d_{3}\beta) - \frac{1}{\rho} S^{T}\alpha \wedge (\adj H)\,\Hd{\alpha} \\
      &= d_{3}(H\alpha) \barwedge H\alpha - H\alpha \barwedge d_{3}(H\alpha)
        - \frac{1}{\rho} (\adj H)S \,\vol_{\alpha} \\
      &= R\Hd{\alpha} \barwedge H\alpha - H\alpha \barwedge R\Hd{\alpha}
        + (HB\Hd{\alpha}) \barwedge H\alpha - H\alpha \barwedge (HB\Hd{\alpha}) \eqbreak
        - \frac{1}{\rho} (\adj H)S \,\vol_{\alpha}
    \end{split}
  \end{equation*}
  and hence
  \begin{equation*}
    (RH + HBH)^{\natural} = -\frac{1}{\rho} (\adj H)S.
  \end{equation*}
  Now \eqref{eq:H-p} gives
  \begin{equation*}
    \begin{split}
      (H')^{\natural}
      &= -\frac{1}{s} H^{\natural} + \frac{\rho}{4hs}(HBH)^{\natural} - \frac{s}{4h}(RH)^{\natural} \\
      &= 0 + \frac{1}{4s}(HBH)^{\natural} + \frac{s}{4h\rho} (\adj H)S.
    \end{split}
  \end{equation*}
  But
  \begin{equation*}
    \begin{split}
      (HBH)^{\natural}_{i}
      &= \sum_{a,p=1}^{3} H_{ja}H_{kp}(B_{ap}-B_{pa}) \\
      &= \sum_{(abc)=(123)} B^{\natural}_{a} (H_{jb}H_{kc} - H_{jc}H_{kb})
        = (\adj H)_{ia}B^{\natural}_{a}.
    \end{split}
  \end{equation*}
  So
  \begin{equation*}
    (H')^{\natural} = \frac{1}{4s}(\adj H)B^{\natural} + \frac{s}{4h\rho}(\adj H)S
    = 0
  \end{equation*}
  by~\eqref{eq:B-n}, and $H'$ is symmetric.

  Thus we have a consistent system, and start data $(\alpha,H)$ at $s_{0}$
  determines a unique solution.
\end{proof}

\section{Examples and consequences}
\label{sec:exampl-consequences}

The above results are formulated with respect to fixed generators
$U_{1},U_{2},U_{3}$ for the $T^{3}$-action.
In general, a change of basis is given by a constant element of $\GL(3,\bR)$.

\begin{lemma}
  If we change the generating vector fields by $\tilde{U} = PU$, for
  $P \in \GL(3,\bR)$, then the quantities expressing the nearly
  parallel $\G_{2}$-structure change by
  \begin{gather*}
    \tilde{\nu} = (\det P)\nu,\quad
    \tilde{H} = PHP^{T},\quad
    \tilde{h} = (\det P)^{2}h,\quad
    \tilde{\rho} = (\det P)^{2}\rho,\\
    \tilde{\theta} = (P^{T})^{-1}\theta,\quad
    \tilde{\alpha} = (\adj P)^{T}\alpha,\quad
    \tilde{\sigma} = (\det P) P\sigma,\quad
    \tilde{\tau} = (\adj P)^{T}\tau.
  \end{gather*}
\end{lemma}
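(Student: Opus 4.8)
The plan is to derive every identity directly from the way the tensor defining the quantity responds to the substitution $\tilde{U}_{i} = \sum_{j}P_{ij}U_{j}$; nothing is needed beyond linear and multilinear algebra. First I would dispose of the first line. Since $\varphi$ is alternating trilinear, $\tilde{\nu} = \varphi(\tilde{U}_{1},\tilde{U}_{2},\tilde{U}_{3}) = (\det P)\,\varphi(U_{1},U_{2},U_{3}) = (\det P)\nu$. Since $\tilde{H}_{ij} = g(\tilde{U}_{i},\tilde{U}_{j}) = \sum_{a,b}P_{ia}P_{jb}\,g(U_{a},U_{b})$, one reads off $\tilde{H} = PHP^{T}$, and then $\tilde{h} = \det\tilde{H} = (\det P)^{2}h$ and $\tilde{\rho} = \tilde{h}-\tilde{\nu}^{2} = (\det P)^{2}\rho$ follow at once.

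Next I would handle $\theta$ and $\alpha$. Using $\theta = H^{-1}U^{\flat}$ and linearity of the musical isomorphism, $\tilde{U}^{\flat} = PU^{\flat}$, so $\tilde{\theta} = (PHP^{T})^{-1}PU^{\flat} = (P^{T})^{-1}H^{-1}U^{\flat} = (P^{T})^{-1}\theta$, which equals $(\det P)^{-1}(\adj P)^{T}\theta$. For $\alpha$, set $\mu_{i} = \varphi(U_{j},U_{k},{}\cdot{})$, so $\alpha = \nu\theta-\mu$ by~\eqref{eq:horizontal-forms}. The key algebraic point is that the alternating bilinear map $(x,y)\mapsto\varphi(\sum_{a}x_{a}U_{a},\sum_{b}y_{b}U_{b},{}\cdot{})$ on $\bR^{3}\times\bR^{3}$ factors through $\Lambda^{2}\bR^{3}\cong\bR^{3}$, sending $e_{i}$ to $\mu_{i}$, and that this identification intertwines $\Lambda^{2}M$ with $(\adj M)^{T}$, equivalently $(Mx)\times(My)=(\adj M)^{T}(x\times y)$. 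As $\tilde{U}_{a}=\sum_{b}P_{ab}U_{b}$ has coordinate vector the $a$-th row of $P$, i.e.\ the $a$-th column of $P^{T}$, feeding these through the identification gives $\tilde{\mu}_{i}=\mu\bigl((\adj P^{T})^{T}e_{i}\bigr)=((\adj P)^{T}\mu)_{i}$, and then
\begin{equation*}
  \tilde{\alpha} = \tilde{\nu}\tilde{\theta}-\tilde{\mu} = (\det P)\nu\,(\det P)^{-1}(\adj P)^{T}\theta - (\adj P)^{T}\mu = (\adj P)^{T}(\nu\theta-\mu) = (\adj P)^{T}\alpha.
\end{equation*}

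The remaining three then cascade. From $\tilde{\beta}=\tilde{H}\tilde{\alpha}=PHP^{T}(\adj P)^{T}\alpha$ and the identity $P^{T}(\adj P)^{T}=((\adj P)P)^{T}=(\det P)\Id$, I obtain $\tilde{\beta}=(\det P)P\beta$. Applying the same $\Lambda^{2}\bR^{3}\cong\bR^{3}$ identification to $\tilde{\tau}_{i}=(4/\tilde{\rho})\tilde{\beta}_{j}\wedge\tilde{\beta}_{k}$, the $(\det P)^{2}$ from the two $\beta$-factors cancels the $(\det P)^{2}$ in $\tilde{\rho}$, leaving $\tilde{\tau}=(\adj P)^{T}\tau$; and finally $\tilde{\sigma}=\tilde{H}\tilde{\tau}=PHP^{T}(\adj P)^{T}\tau=(\det P)P(H\tau)=(\det P)P\sigma$.

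There is no serious obstacle: the whole argument is bookkeeping. The one delicate point is the placement of transposes, namely tracking that $P$ acts on the generators via $\tilde{U}_{a}=\sum_{b}P_{ab}U_{b}$ so coordinate vectors transform by $P^{T}$, that $\Lambda^{2}\bR^{3}\cong\bR^{3}$ converts $P^{T}$ into $(\adj P^{T})^{T}=\adj P$ on $\Lambda^{2}$ and hence $(\adj P)^{T}$ on the vectors $\mu$, $\alpha$, $\tau$, and that $(\det P)(P^{T})^{-1}=(\adj P)^{T}$ and $P^{T}(\adj P)^{T}=(\det P)\Id$. Getting these right is essentially all that has to be checked.
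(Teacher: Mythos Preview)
Your proof is correct and follows essentially the same approach as the paper's own proof, just with more detail: the paper derives $\tilde{\theta}$ from the duality $\tilde{\theta}_{i}(\tilde{U}_{j})=\delta_{ij}$ rather than from $\theta=H^{-1}U^{\flat}$, and leaves the computation of $\tilde{\mu}$ (and hence $\tilde{\alpha}$) implicit, whereas you make the $\Lambda^{2}\bR^{3}\cong\bR^{3}$ identification and the resulting $(\adj P)^{T}$ explicit. The remaining cascade through $\beta$, $\tau$, $\sigma$ is identical.
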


\begin{proof}
  The expression for $\tilde{\nu}$ follows directly from the definition.
  For $\tilde{H}$, we have
  $\tilde{H}_{ij} = \sum_{a,b=1}^{3} P_{ia}P_{jb}g(U_{a},U_{b})$ and the claimed
  formula.
  Now, writing $\tilde{\theta} = \tilde{P}\theta$, we have
  $\delta_{ij} = \tilde{\theta}_{i}(\tilde{U}_{j}) = \sum_{a,b=1}^{3}
  \tilde{P}_{ia}P_{jb}\theta_{a}(U_{b}) = \sum_{a=1}^{3} \tilde{P}_{ia}P_{ja}$,
  thus $\tilde{P} = (P^{T})^{-1}$ and
  $\tilde{\nu}\tilde{\theta} = (\adj P)^{T}\nu\theta$, from which the expression
  for~$\tilde{\alpha}$ follows.
  Note that $\tilde{\beta} = \tilde{H}\tilde{\alpha} = (\det P)P\beta$ and the
  transformation rule for $\tau = (4/\rho)\,\Hd{\beta}$ follows.
  Finally, we use $\sigma = H\tau$, to get~$\tilde{\sigma}$.
\end{proof}

As a consequence, we can always arrange that $H = \Id_{3}$ at a given point.
If $H = \Id_{3}$ globally on $Q$, then the two conditions $d_{3}\sigma = 0$ and
$d_{3}\tau = 0$ are the same.

\begin{proposition}
  \label{prop:eta}
  Let $Q$ be a connected oriented three-manifold, and suppose
  $\eta = (\eta_{1},\eta_{2},\eta_{3})^{T}$ be an $\bR^{3}$-valued closed
  two-form on~$Q$ whose components are linearly independent at each point, and
  whose cohomology class is integral.
  Then there is a nearly parallel $\G_{2}$-manifold~$M$ with $T^{3}$-symmetry,
  and a regular value $s_{0} \ne 0$ of the multi-moment map, such that either
  $\eta$ or $-\eta$ is the form $\tau/s_{0}$ on~$Q$ of
  Proposition~\ref{prop:tau-base}.
\end{proposition}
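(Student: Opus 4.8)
The plan is to invoke Theorem~\ref{thm:flow-construction} with a suitably chosen
$H$ on $Q$, so it suffices to produce a symmetric positive-definite
$H\in C^{\infty}(Q,M_{3}(\bR))$ and a nonzero $s_{0}$ making the hypotheses of
that theorem hold while forcing $\tau/s_{0}=\pm\eta$. The natural attempt is to
take $H$ constant, indeed after a $\GL(3,\bR)$ change of generators (the Lemma
above) we may seek $H=\Id_{3}$ globally. With $H=\Id_{3}$ we have $\beta=\alpha$,
hence $\tau=\sigma=(4/\rho)\,{\star}_{\alpha}\alpha=(4h/\rho)\,\Hd{\alpha}$, and
the two closedness conditions $d_{3}\tau=0$ and $d_{3}\sigma=0$ coincide, so only
one condition must be arranged. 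Thus I want a coframe $\alpha$ on $Q$ for which
$\Hd{\alpha}=(\alpha_{23},\alpha_{31},\alpha_{12})^{T}$ is closed and represents
the prescribed integral class up to the scalar $4h/(\rho s_{0})$; but $h=1$ when
$H=\Id_{3}$, so $\rho=1-s_{0}^{2}$ is a constant and the scalar is just the
constant $4/((1-s_{0}^{2})s_{0})$.

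So the real task reduces to: given a closed $\bR^{3}$-valued two-form $\eta$ on
$Q$ with pointwise linearly independent components and integral class, find a
coframe $\alpha_{1},\alpha_{2},\alpha_{3}$ on $Q$ and a constant $s_{0}\neq0$,
$0<s_{0}^{2}<1$, with
\begin{equation*}
  \alpha_{j}\wedge\alpha_{k}=\frac{(1-s_{0}^{2})s_{0}}{4}\,\eta_{i},
  \qquad(ijk)=(123).
\end{equation*}
Here is where I would do the work. Since the $\eta_{i}$ are pointwise linearly
independent two-forms on a three-manifold, each $\eta_{i}$ is a nonvanishing
section of $\Lambda^{2}T^{*}Q$, and the triple spans $\Lambda^{2}T^{*}Q$ fibrewise;
dually, contracting with the orientation gives three pointwise linearly
independent one-forms, i.e. a coframe $\beta_{1},\beta_{2},\beta_{3}$ with
$\eta_{i}=\beta_{jk}$ automatically — this is just the statement that wedging a
coframe gives a ``co-coframe'' of two-forms and the map is invertible. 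Concretely,
pick any coframe $\gamma$; then $\eta_{i}=\sum_{a}C_{ia}\gamma_{jk}$ for a
function-valued matrix $C$ with $\det C>0$ (by linear independence and a choice
of orientation), and one checks that setting $\alpha=\lambda(\adj C)^{T}\gamma$
for a suitable function $\lambda$ and then rescaling by the constant
$((1-s_{0}^{2})s_{0}/4)^{1/2}$ makes $\Hd{\alpha}$ proportional to $\eta$ with
the right constant; closedness of $\Hd{\alpha}$ then follows from $d\eta=0$
because $\Hd{\alpha}$ is literally a constant multiple of $\eta$. Finally choose
any $s_{0}\in(0,1)$, e.g. $s_{0}=1/2$; then $0<s_{0}^{2}<1=h$, and the bundle
$E\to Q$ with $d_{6}\theta=(d_{3}\alpha-\tau)/s_{0}$ exists because
$[\tau/s_{0}]=[\pm\eta]$ is integral by hypothesis.

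Applying Theorem~\ref{thm:flow-construction} to $(Q,h_{Q}=\alpha^{T}\alpha)$ with
this $\alpha$ and $s_{0}$ then produces an open $\bR$-connected $M\subset\bR\times E$
carrying a $T^{3}$-invariant nearly parallel $\G_{2}$-structure whose induced data
on $Q$ at $s=s_{0}$ is exactly $(\alpha,h_{Q})$; by Proposition~\ref{prop:tau-base}
the associated form on $Q$ is $\tau/s_{0}$, which by construction equals $\eta$ or
$-\eta$ (the sign ambiguity is genuine: replacing $s_{0}$ by $-s_{0}$, or changing
the orientation of $Q$, flips the sign of $\Hd{\alpha}$ versus $\eta$). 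The main
obstacle I anticipate is purely bookkeeping: verifying that the function $\lambda$
can be chosen so that $\alpha$ is a genuine coframe (nonvanishing determinant) and
simultaneously $\Hd{\alpha}$ lands on the right constant multiple of $\eta$ — this
amounts to tracking how $\Hd{(\adj C)^{T}\gamma)}$ relates to $C\,\Hd{\gamma}$, i.e.
the identity $\Hd{(N\gamma)}=(\adj N)\,\Hd{\gamma}$ used repeatedly in the paper,
together with a single scalar normalisation; no analytic difficulty arises since
$H$ is constant and all closedness is inherited directly from $d\eta=0$.
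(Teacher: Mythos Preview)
Your proposal is correct and follows the paper's approach: set $H=\Id_{3}$ so that $\sigma=\tau$ and both closedness conditions collapse to $d\eta=0$, then manufacture a coframe $\alpha$ with $\Hd{\alpha}$ equal to the required constant multiple of~$\eta$, and invoke Theorem~\ref{thm:flow-construction}. The only tactical difference is in building~$\alpha$: the paper takes the specific coframe $\gamma$ dual to the line fields $\ker\eta_{i}$ (so that $\eta=\operatorname{diag}(f_{1},f_{2},f_{3})\Hd{\gamma}$ is diagonal) and then rescales each $\gamma_{i}$ separately, whereas you work with an arbitrary reference coframe and invert via the adjugate; both arrive at the same $\alpha$ up to sign, and the sign is absorbed into the $\pm\eta$ alternative exactly as you note. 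One small correction: the identity you quote at the end should read $\Hd{(N\gamma)}=(\adj N)^{T}\Hd{\gamma}$, with a transpose, though your actual choice $\alpha=\lambda(\adj C)^{T}\gamma$ already uses it correctly.
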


In fact, we will show that there is one such solution for each $s_{0}$ with
$s_{0}^{2} \in (0,1)$.

\begin{proof}
  We will generate initial data on~$Q$ as in
  Theorem~\ref{thm:flow-construction}, with $H = \Id_{3}$.
  Thus $h = 1$ and $0 < s_{0}^{2} < 1$. Then
  $\sigma = \tau = \epsilon s_{0}\eta$, $\epsilon \in \{\pm1\}$ and these are
  $d_{3}$-closed by assumption.
  Write $c_{0}^{2} = 1 - s_{0}^{2} = \rho$.
  We need to find a coframe~$\alpha$, so that
  $\sigma = (4/c_{0}^{2})\,\Hd{\alpha}$.
  This is equivalent to
  $\Hd{\alpha} = \epsilon(c_{0}^{2}s_{0}/4)\eta \eqqcolon \epsilon\hat{c} \eta$.

  As $\eta_{i}$, for $i=1,2,3$, is a non-zero two-form at each point, we have
  that each $\ker \eta_{i}$ is a one-dimensional distribution.
  Fixing a background metric $g_{Q}$ on $Q$, there are locally defined unit
  length vector fields $Y_{1},Y_{2},Y_{3}$ such that $Y_{i} \hook \eta_{i} = 0$,
  $i=1,2,3$.
  Indeed at each point there are two choices $\pm Y_{i}$.
  Locally, let $\gamma= (\gamma_{1},\gamma_{2},\gamma_{3})$ be the dual coframe
  to $(Y_{1},Y_{2},Y_{3})$.
  Then we may use the orientation of~$Q$ to fix the choice of sign for $Y_{i}$
  by requiring $\gamma_{i} \wedge \eta_{i} > 0$ for $i=1,2,3$.
  Hence $Y_{i}$ and $\gamma_{i}$ are globally defined.

  Now $Y_{i} \hook \eta_{i} = 0$ implies for $(ijk) = (123)$ that
  $\eta_{i} = f_{i}\gamma_{j} \wedge \gamma_{k}$ for some functions
  $f_{i} \in C^{\infty}(Q)$.
  Thus $\eta = f^{T}\,\Hd{\gamma}$.
  We wish to take $\alpha_{i} = a_{i}\gamma_{i}$ for some functions~$a_{i}$.
  We need to choose $a_{i}$ so that
  \begin{equation}
    \label{eq:f-aa}
    a_{j}a_{k} = \epsilon\hat{c}f_{i},\quad\text{for $(ijk) = (123)$.}
  \end{equation}
  Multiplying \eqref{eq:f-aa} by $a_{i}$, we get
  \begin{equation*}
    a_{i}a_{j}a_{k} = \epsilon\hat{c} a_{i}f_{i}.
  \end{equation*}
  But the left-hand side is independent of $(ijk)=(123)$, so $a_{i} = b/f_{i}$
  for some function~$b$ independent of~$i$.
  Putting this into \eqref{eq:f-aa}, we get
  $b^{2} = \epsilon\hat{c} f_{1}f_{2}f_{3}$.
  From linear independence of the $\eta_{i}$, we have $f_{1}f_{2}f_{3}$ is
  nowhere vanishing, so the product has a fixed sign, and we take $\epsilon$ to
  be this sign.
  Thus $\alpha_{i} = (\epsilon\hat{c} f_{1}f_{2}f_{3})^{1/2} \gamma_{i} / f_{i}$
  satisfies $\Hd{\alpha} = \hat{c} \epsilon\eta$, and we have the initial data
  needed to apply Theorem~\ref{thm:flow-construction}.
\end{proof}

\begin{remark}
  The sign change needed in the above proof can also be obtained by swapping
  $\eta_{2}$ and $\eta_{3}$.
\end{remark}

\begin{remark}
  Since $\sigma = H\tau$, the two triples of two-forms $\sigma$ and $\tau$
  determine~$H$.
  One can then formulate a version of Proposition~\ref{prop:eta} in terms of
  $s_{0}$, $\sigma$ and~$\tau$.
  However, the triples need to be restricted so that $H$ becomes symmetric and
  positive definite.
\end{remark}

\begin{example}
  Consider $Q=\bR^3$ as an Abelian Lie group with left-invariant data
  $(\alpha, H=(H_{ij}), s_{0} > 0)$ for the inverse construction.
  We have $d_{3}\alpha_{i} = 0$ and $h$ and $H$ are constant on~$Q$.
  We may change basis for the action so that $H(s_{0}) = \Id_{3}$, and then use
  $e = \alpha(s_{0})$ as a fixed basis for~$T^{*}_{0}Q \cong \bR^{3}$.
  Note that $0 < s_{0} < 1$.

  Consider possible evolution of the data in the form $H(s) = r(s)\Id_{3}$ and
  $\alpha(s) = u(s)e$ for some functions $r$ and $u$ of~$s$.
  Note that $d_{3}H = 0 = d_{3}\alpha$, so $B = 0 = K = R$.
  Equation~\eqref{eq:H-p} gives $r' = -r/s$, so $r = s_{0}/s$.
  We have from $\alpha' = A\alpha$ with $A = u'\Id_{3}$ given by~\eqref{eq:A},
  and $F$ by~\eqref{eq:F-ip} is zero, so
  \begin{equation*}
    u' = - \frac{5s}{2(r^{3} - s^{2})}u
    = - \frac{5s^{4}}{2(s_{0}^{3} - s^{5})}u,
  \end{equation*}
  so
  \begin{equation*}
    u = \frac{(s_{0}^{3} - s^{5})^{1/2}}{(s_{0}^{3} - s_{0}^{5})^{1/2}} =
    \frac{(s_{0}^{3} - s^{5})^{1/2}}{c_{0}s_{0}^{3/2}},
  \end{equation*}
  where $c_{0} = (1-s_{0})^{1/2} > 0$.
  One now checks that the flow equations are satisfied, so this is the unique
  solution with the given initial conditions, and all left-invariant solutions
  on $\bR^{3}$ are obtained in this way.

  The resulting nearly parallel $\G_{2}$-structure from
  \eqref{eqlist:np-metric}--\eqref{eqlist:np-str-star} has metric and three-form
  \begin{align*}
    g
    & = \frac{s^3}{16(s_{0}^3-s^5)}ds^2 + \frac{s_{0}}{s}\sum_{i=1}^3\theta_i^2
      + \frac{s^{2}}{c_{0}^{2}s_{0}^{2}}\sum_{i=1}^3(e^i)^2, \\
    \varphi
    & = s\theta_{123}
      - \frac{(s_{0}^3-s^5)^{1/2}}{c_{0}^{}s_{0}^{3/2}} \sum_{(ijk)=(123)} \theta_{ij} \wedge e^k
      - \frac{s^{4}}{c_{0}^{2}s_{0}^{3}} \sum_{(ijk)=(123)} \theta_i \wedge e^{jk} \eqbreak
      - \frac{s_{0}s^{2}}{4c_{0}^{}s_{0}^{3/2}
      (s_{0}^{3} - s^{5})^{1/2}} ds \wedge \sum_{k=1}^3 \theta_k \wedge e^k
      + \frac{s^{3}(s_{0}^3-s^5)^{1/2}}{c_{0}^{3}s_{0}^{9/2}}e^{123},
  \end{align*}
  with
  \begin{equation*}
    d_{6}\theta_{i} = -\frac{4}{c_{0}^{2}s_{0}^{2}} e^{jk}, \qquad
    \theta_{i}' = - \frac{3s^{3}}{2c_{0}s_{0}^{3/2}(s_{0}^{3} - s^{5})}e^{i}.
  \end{equation*}
  Extending the algebra by adding $p^{i}$, $i=1,2,3$, with $dp^{i} =
  d_{6}\theta_{i}$, we have
  \begin{equation*}
    \theta_{i} = p^{i} + x(s)e^{i},
  \end{equation*}
  where $x'e^{i} = \theta_{i}'$.
  This structure may thus be regarded as defined on $(0,s_{0}^{3/5}) \times G$, where
  $\mathfrak{g}$ is the nilpotent algebra $(0,0,0,23,31,12)$, and $G$ acts with
  cohomogeneity one.
\end{example}

Other first candidates for the inverse construction are homogeneous
manifolds~$Q$ of dimension three with left-invariant data.
As they have a global left-invariant coframe, such spaces~$Q$ are necessarily
Lie groups.
Bianchi's classification of three-dimensional Lie algebras then yields all
suitable Lie groups.
Our requirement of having a triple of linearly independent closed two-forms,
then implies that the Lie algebra is unimodular.
The only potential cases where one could get compact examples of nearly parallel
$\G_{2}$-structures via our inverse construction are for $\mathfrak{g}=\bR^{3}$ and
$\mathfrak{su}(2)$.
We have already investigated the first case above.
When the curvatures are integral we may get solutions over $Q=T^{3}$, but the
metrics are incomplete.
For $Q=\SU(2)$, one obtains nearly parallel $\G_{2}$-structures with a
cohomogeneity-one action of $\SU(2) \times T^{3}$, see a special case below.
This can not yield a complete metric: by Myers' Theorem, the seven-dimensional
manifold would be compact with finite fundamental group, which implies there are two special orbits, but at each special orbit at most one factor of~$T^{3}$
collapses, so the fundamental group remains infinite.

\begin{example}
  Let us consider the concrete case of $Q=SU(2)$.
  Let $e^{1},e^{2},e^{3}$ be the standard left-invariant coframe on~$SU(2)$ with
  $de^{i} = e^{jk}$.
  Take $\alpha = u(s)e$ and $H = r(s)\Id_{3}$.
  Then we have $d_{3}H = 0$ and $d_{3}\alpha = u(s)\,\Hd{e}$, so
  $B = (1/u(s))\Id_{3}$, and $u(s) A = u'(s)\Id_{3}$.
  Equations~\eqref{eq:H-p} and~\eqref{eq:A} give
  \begin{equation*}
    r' = - \frac{r}{s} + \frac{r^{3}-s^{2}}{4sru},\qquad
    u' = \frac{s}{4r^{2}} - \frac{5su}{2(r^{3}-s^{2})}.
  \end{equation*}
  Using the initial conditions $s_{0}=1/2$, $r(s_{0}) = 1 = u(s_{0})$, one can
  verify numerically that there is a smooth solution on $(0,s_{\max})$ for some
  $s_{\max} > s_{0}$ with $\lim_{s\uparrow s_{\max}}u(s) = 0$.
  The resulting nearly parallel $\G_{2}$-structure is
  \begin{align*}
    g &= \frac{1}{16(r^{3}-s^{2})} ds^{2} + r\sum_{i=1}^{3}\theta_{i}^{2}
        + \frac{u^{2}r}{r^{3}-s^{2}}\sum_{i=1}^{3} (e^{i})^{2},\\
    \varphi &= s\theta_{123}
        - u \sum_{(ijk)=(123)} \theta_{ij} \wedge e^{k}
        - \frac{su^{2}}{r^{3}-s^{2}} \sum_{(ijk)=(123)} \theta_{i} \wedge e^{jk}
        \eqbreak
        - \frac{ur}{4(r^{3}-s^{2})} ds \wedge \sum_{i=1}^{3} \theta_{i} \wedge e^{i}
        + \frac{u^{3}}{r^{3}-s^{2}} e^{123},
  \end{align*}
  with
  \begin{equation*}
    d_{6}\theta_{i} = \frac{u}{s}\biggl(1-\frac{4r^{2}u}{r^{3}-s^{2}}\biggr)e^{jk},\qquad
    \theta_{i}' = \biggl(\frac{1}{4r^{2}} - \frac{3u}{2(r^{3}-s^{2})}\biggr)e_{i}.
  \end{equation*}
  Noting that the $s$-derivative of the coefficient of $e^{jk}$ in $d_{6}\theta$ is
  indeed the coefficient of $e_{i}$ in $\theta_{i}'$, we have
  \begin{equation*}
    \theta_{i} = p^{i} + \frac{u}{s}\biggl(1-\frac{4r^{2}u}{r^{3}-s^{2}}\biggr)e^{i},
  \end{equation*}
  where $dp^{i} = 0$.
  Thus $p^{1},p^{2},p^{3},e^{1},e^{2},e^{3}$ can be taken as an invariant
  coframe for $T^{3} \times SU(2)$, and the (incomplete) nearly parallel
  $\G_{2}$-structure has symmetry group of rank~$4$.

  Note that this is not the general left-invariant solution on~$Q = \SU(2)$,
  unlike in the case of~$Q = \bR^{3}$; for general initial data, we can not
  simultaneously have $H = r \Id_{3}$ and $\alpha = u e$ at $s=s_{0}$.
\end{example}

\begin{corollary}
  There exist incomplete nearly parallel $\G_{2}$-structures that have
  $T^{4}$-symmetry.
\end{corollary}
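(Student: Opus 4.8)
The plan is to deduce the statement directly from the cohomogeneity-one example on $Q=\SU(2)$ constructed just above, which already produces an incomplete nearly parallel $\G_{2}$-structure; all that is left is to identify an effective $T^{4}$ inside its automorphism group.

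First I would recall that in that example the total space is an open $\bR$-connected subset~$M$ of $\bR\times E$, with $E$ the principal $T^{3}$-bundle over $\SU(2)$ entering the inverse construction. The fibrewise $T^{3}$ preserves the three-form~\eqref{eqlist:np-str} by the very construction, and since the initial data $\alpha=u(s)e$, $H=r(s)\Id_{3}$ and the resulting connection~$\theta$ are all left-invariant, the left translations of $\SU(2)$ on the base lift to~$M$ and also preserve~$\varphi$. These two actions commute (the lifted $\SU(2)$ acts by principal bundle automorphisms) and intersect trivially (a base translation moves points of~$Q$, a fibre translation does not), so $\mathrm{Aut}(M,\varphi)\supseteq T^{3}\times\SU(2)$, a group of rank~$4$; a maximal torus of it is a $T^{4}$, and it acts effectively because the fibrewise $T^{3}$ is effective and a circle in $\SU(2)$ acts non-trivially on the base. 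Incompleteness is then already available: the constructed solution exists only on a bounded $s$-interval with $u\to0$ at the upper end, and structurally a complete cohomogeneity-one $\SU(2)\times T^{3}$-manifold is excluded by the Myers/fundamental-group argument recorded before the example.

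The one point I would want to spell out — and it is immediate from the computation in that example — is that the lifted circle and the fibrewise $T^{3}$ genuinely close up to an honest torus rather than an $\bR^{4}$. This follows from the identity $\theta_{i}=p^{i}+c(s)e^{i}$ with $dp^{i}=0$ established there: taking $p^{i}=d\phi^{i}$ for the angular coordinates $\phi^{i}$ on the $T^{3}$-fibre, one obtains a global coframe $p^{1},p^{2},p^{3},e^{1},e^{2},e^{3}$ on~$E$ in which $\varphi$, $g$ and $\star\varphi$ have coefficients depending only on~$s$, exhibiting the asserted $T^{4}$-invariance explicitly. Since the hardest analytic work (solving the flow, checking symmetry of~$H$) is already contained in Theorem~\ref{thm:flow-construction} and in the $\SU(2)$ example, there is no real obstacle remaining; the corollary is essentially a matter of bookkeeping the symmetries of that example.
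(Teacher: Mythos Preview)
Your proposal is correct and takes essentially the same approach as the paper, which gives no separate proof for the corollary but simply states it as an immediate consequence of the preceding $\SU(2)$ example, where the identity $\theta_{i}=p^{i}+c(s)e^{i}$ with $dp^{i}=0$ exhibits an invariant coframe for $T^{3}\times\SU(2)$ and hence a rank-$4$ symmetry group. Your remarks on effectiveness, on the triviality of the bundle over $\SU(2)\cong S^{3}$, and on incompleteness make explicit exactly the points the paper leaves implicit.
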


Taking $Q$ compact with just a circle symmetry, and using circle invariant data
for the inverse construction, may yield examples with non-trivial stabilisers at
some points of $\nu^{-1}(0)$, hence potential compact solutions with
$T^{4}$-symmetry.

\end{document}